\theoremstyle{plain}
\newtheorem{theorem}{Theorem}
\newtheorem{definition}{Definition}
\newtheorem{corollary}[theorem]{Corollary}
\newtheorem{claim}[theorem]{Claim}
\newtheorem{lemma}[theorem]{Lemma}
\theoremstyle{definition}
\DeclareMathOperator*{\F}{\mathcal{F}}
\DeclareMathOperator*{\G}{\mathcal{G}}
\DeclareMathOperator{\pr}{pr}
\DeclareMathOperator\ex{ex}
\DeclareMathOperator\La{La}
\DeclareMathOperator\rk{rk}
\author{Istv\'{a}n Tomon\thanks{\'{E}cole Polytechnique F\'{e}d\'{e}rale de Lausanne, Research partially supported by Swiss National Science Foundation grants no. 200020-162884 and 200021-175977.			
		\emph{e-mail}: \textbf{istvan.tomon@epfl.ch}}
}
\title{Forbidden induced subposets of given height}
\begin{document}
\sloppy
\maketitle

\begin{abstract}
Let $P$ be a partially ordered set. The function $\La^{\#}(n,P)$ denotes the size of the largest family $\F\subset 2^{[n]}$ that does not contain an induced copy of $P$. It was proved by  Methuku and P\'{a}lv\"{o}lgyi that there exists a constant $C_{P}$ (depending only on $P$) such that $\La^{\#}(n,P)<C_{P}\binom{n}{\lfloor n/2\rfloor}$. However, the order of the constant $C_{P}$ following from their proof is typically exponential in $|P|$. Here, we show that if the height of the poset is constant, this can be improved. We show that for every positive integer $h$ there exists a constant $c_{h}$ such that if $P$ has height at most $h$, then $$\La^{\#}(n,P)\leq |P|^{c_{h}}\binom{n}{\lfloor n/2\rfloor}.$$ 

Our methods also immediately imply that similar bounds hold in grids as well. That is, we show that if $\F\subset [k]^{n}$ such that $\F$ does not contain an induced copy of $P$ and $n\geq 2|P|$, then $$|\F|\leq |P|^{c_{h}}w,$$
where $w$ is the width of $[k]^{n}$.

A small part of our proof is to partition $2^{[n]}$ (or $[k]^{n}$) into certain fixed dimensional grids of large sides. We show that this special partition can be used to derive bounds in a number of other extremal set theoretical problems and their generalizations in grids, such as the size of families avoiding weak posets, Boolean algebras, or two distinct sets and their union. This might be of independent interest.
\end{abstract}

\section{Introduction}

Let us introduce the basic notation related to partially ordered sets (posets) used throughout the paper, which is mostly conventional. If $P$ is a poset, we denote by $\leq_{P}$ the partial order acting on the elements $P$. If it is clear from the context which poset is under consideration, we may use $\leq$ instead of $\leq_{P}$. A \emph{chain} is a poset, in which any two elements are comparable, and an \emph{antichain} is a poset, in which any two elements are incomparable. The \emph{height} of $P$ is the size of the largest chain in $P$, and the \emph{width} of $P$ is the size of the largest antichain in $P$. The \emph{dimension} of $P$ is the smallest positive integer $d$ for which there exist $d$ linear orderings $\pi_{1},\dots,\pi_{d}:P\rightarrow [|P|]$ such that for $x,y\in P$, we have $x<_{P}y$  if and only if $\pi_{i}(x)<\pi_{i}(y)$ for $i=1,\dots,d$.

 Let $P$ and $Q$ be posets. A \emph{weak copy} of $P$ in $Q$ is a subset $P'$ of $Q$ for which there exists a bijection $\pi:P\rightarrow P'$ such that whenever $x\leq_{P} y$, we have $\pi(x)\leq_{Q} \pi(y)$. Moreover, $P'$ is an \emph{induced copy} of $P$, if $x\leq_{P}y$ holds if and only if $\pi(x)\leq_{Q}\pi(y)$. For example, every chain of size $|P|$ is a weak copy of $P$, but not an induced copy, unless $P$ is also a chain.  We say that a subset $\F\subset Q$ is \emph{(weak $P$)-free}, or \emph{(induced $P$)-free} if $\F$ does not contain a weak copy of $P$, or an induced copy of $P$, respectively.

In this paper, we consider forbidden subposet problems in the Boolean lattice. The \emph{Boolean lattice} $2^{[n]}$ is the power set of $[n]=\{1,\dots,n\}$ ordered by inclusion. For a poset $P$, we define the two functions 
$$\La(n,P)=\max\{|\F|: \F\subset 2^{[n]}\mbox{ is (weak }P\mbox{)-free}\},$$
and
$$\La^{\#}(n,P)=\max\{|\F|: \F\subset 2^{[n]}\mbox{ is (induced }P\mbox{)-free} \}.$$

Forbidden weak and induced subposet problems in the Boolean lattice $2^{[n]}$ are extensively studied. One of the first such results is the classical theorem of Sperner \cite{S}, which states that if $P$ is a chain of size $2$, then $\La(n,P)=\La^{\#}(n,P)=\binom{n}{\lfloor n/2\rfloor}$. This is equivalent to the statement that the width of $2^{[n]}$ is $\binom{n}{\lfloor n/2\rfloor}$. Erd\H{o}s \cite{E} generalized this result in the case $P$ is a chain of size $k$; in this case, $$\La(n,P)=\La^{\#}(n,P)=\max_{l}\sum_{i=l+1}^{l+k-1}\binom{n}{l}\leq (k-1)\binom{n}{\lfloor n/2\rfloor}.$$
Note that this implies that for any poset $P$ on $k$ elements, we have the following bound on the weak subposet problem: $\La(n,P)\leq (k-1)\binom{n}{\lfloor n/2\rfloor}$. The value of $\La(n,P)$ was also studied for a number of fixed posets such as forks and brooms \cite{DK, KT, TH}, diamond \cite{GLL, KMY}, butterfly \cite{DKS}, cycles $C_{4k}$ on two levels \cite{GL}. In case the Hasse diagram of $P$ is a tree, it was proved by Bukh \cite{B} that $\La(n,P)< (h-1+o(1))\binom{n}{\lfloor n/2\rfloor}$, where $h$ is the height of $P$, and Boehnlein and Jiang \cite{BJ} improved this result by showing that $\La^{\#}(n,P)<(h-1+o(1))\binom{n}{\lfloor n/2\rfloor}$ also holds. 

In general, Burcsi and Nagy \cite{BN} and Chen and Li \cite{CL} derived bounds on $\La(n,P)$ depending on the height and size of $P$, which was improved by Gr\'{o}sz, Methuku and Tompkins \cite{GMT} to the asymptotically optimal bound.

\begin{theorem}\label{thm2}(Gr\'{o}sz, Methuku and Tompkins \cite{GMT})
 Let $P$ be a poset of height $h$. Then $$\La(n,P)=O(h\log (|P|/h+2))\binom{n}{\lfloor n/2\rfloor}.$$ 
\end{theorem}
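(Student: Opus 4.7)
My proof plan follows a two-part strategy: a chain-averaging reduction, followed by an inductive decomposition of $P$ along its antichain levels.

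The first step is the classical LYM reduction. Let $\mathcal{C}$ denote the uniformly random maximal chain $\emptyset=A_0\subset A_1\subset\cdots\subset A_n=[n]$ induced by a uniformly random permutation of $[n]$. Each set $F\subset[n]$ lies on $\mathcal{C}$ with probability $1/\binom{n}{|F|}$, and hence
\[
\EV\bigl[|\mathcal{C}\cap\F|\bigr]=\sum_{F\in\F}\frac{1}{\binom{n}{|F|}}\geq\frac{|\F|}{\binom{n}{\lfloor n/2\rfloor}}.
\]
It therefore suffices to show $\EV[|\mathcal{C}\cap\F|]=O(h\log(|P|/h+2))$ for every weak-$P$-free $\F$. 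The trivial deterministic estimate $|\mathcal{C}\cap\F|\leq|P|-1$, coming from the fact that any $|P|$ sets on a chain already realise any linear extension of $P$ as a weak copy, gives only the Erd\H{o}s-type bound and is oblivious to $h$; the improvement must be extracted from the chain randomness.

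The second step is an induction on the height $h$. By Mirsky's theorem, $P$ admits a decomposition $P=A_1\sqcup\cdots\sqcup A_h$ into $h$ antichains, which can be chosen so that the largest has size at least $|P|/h$. Let $A=A_h$ and $P^-=A_1\sqcup\cdots\sqcup A_{h-1}$, a poset of height $h-1$. The aim is a reduction estimate of the form
\[
\EV\bigl[|\mathcal{C}\cap\F|\bigr]\leq\EV\bigl[|\mathcal{C}\cap\F^-|\bigr]+O\bigl(\log(|A|+2)\bigr),
\]
for some weak-$P^-$-free subfamily $\F^-\subset\F$. Iterating $h$ times, while balancing the antichain sizes so each is of order $|P|/h$, and using concavity of the logarithm to combine the surcharges as $\sum_{i=1}^{h}\log(|A_i|+2)\leq h\log(|P|/h+2)$, yields the target bound.

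The hard step is the reduction estimate itself, and this is where I expect the technical heart to lie. The plan is: on a fixed chain $\mathcal{C}$, designate the top $k=O(\log(|A|+2))$ elements of $\mathcal{C}\cap\F$ as "anchor" candidates; the remaining sets form $\F^-$. One wants to argue that if $\F^-$ contains a weak copy of $P^-$, then one of the anchors, together with sets of $\F$ on side chains above it, must provide a weak copy of $A$ compatible with the $P^-$-copy, thereby extending it to a weak copy of $P$ and contradicting weak-$P$-freeness. The key subclaim reduces to an antichain-avoidance statement on the Boolean lattice (the $h=1$ case), which in turn requires exhibiting $|A|$ pairwise incomparable extensions above a typical anchor with only $O(\log|A|)$ "budget." Showing that this search succeeds with the right probabilistic weight across random chains --- so that the per-chain surcharge really is $\log|A|$ rather than $|A|$ --- is the delicate point, and will likely require a double-counting or encoding argument of the sort used by Burcsi--Nagy and Chen--Li, refined to achieve the logarithmic saving.
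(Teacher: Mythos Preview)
The paper does not prove Theorem~\ref{thm2}; it is quoted from \cite{GMT} as background. The closest thing the paper proves is Theorem~\ref{weakgrid}, a slightly weaker bound $\La(n,P)=O(h\log^{3/2}|P|)\binom{n}{\lfloor n/2\rfloor}$, and it does so by an entirely different route: it partitions $[k]^n$ into $d$-dimensional grids of large side (Corollary~\ref{biggrid}), bounds families with no strong copy of $K_r^h$ in $[k]^d$ via a block-contraction induction (Lemma~\ref{strongmultipartite}), and transfers back by Theorem~\ref{gridtogrid}. No LYM averaging, no level-peeling induction on $h$ appears.

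Your outline, by contrast, is aiming at the actual GMT argument, but it is not a proof: the entire content sits in the ``reduction estimate'' you state without proving. Moreover, the sketch you give for it does not hold together. You define $\F^-$ as $\F$ with the top $k$ elements of $\mathcal{C}\cap\F$ removed, but this makes $\F^-$ a \emph{random} family depending on the chain $\mathcal{C}$; you cannot then invoke an induction hypothesis of the form ``$\EV[|\mathcal{C}\cap\F^-|]\le\ldots$ for any weak-$P^-$-free $\F^-$'', since the expectation and the family are entangled. Even ignoring this, the claim that if $\F^-$ (the non-anchor part) contained a weak copy of $P^-$ then the anchors would supply an antichain of size $|A|$ above it is unsupported: the anchors lie on a single chain, so they form a chain, not an antichain, and there is no mechanism in your sketch producing $|A|$ pairwise incomparable sets from $O(\log|A|)$ anchors. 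Your final paragraph concedes all this (``the delicate point'', ``will likely require\ldots'').

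The genuine GMT proof does use random chains, but not this peel-one-level induction; it works with \emph{interval chains} (consecutive blocks of a random chain) and a direct combinatorial argument that a long enough interval chain must contain a weak copy of $P$, with the logarithmic factor arising from a dyadic/tree embedding of $P$. If you want to reconstruct their bound, that is the structure to look for; the level-by-level reduction you propose does not obviously close.
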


However, it seems that getting general upper bounds in the forbidden induced subposet problem is more challenging. The value of $\La^{\#}(n,P)$ was studied for small posets such as the $2$-fork \cite{CK} and diamond \cite{LM}.  Lu and Milans \cite{LM} proved that if $P$ is a  poset of height $2$, then $\La^{\#}(n,P)=O(|P|\binom{n}{\lfloor n/2\rfloor})$ and they conjectured that for any poset $P$, $\La^{\#}(n,P)/\binom{n}{\lfloor n/2\rfloor}$ is bounded by a constant depending only on $P$. This conjecture was settled by Methuku and P\'{a}lv\"{o}lgyi \cite{MP}.

\begin{theorem}\label{thm1}(Methuku and P\'{a}lv\"{o}lgyi \cite{MP})
	Let $P$ be a poset. There exists a constant $C_{P}$ such that $$\La^{\#}(n,P)\leq C_{P}\binom{n}{\lfloor n/2\rfloor}.$$
\end{theorem}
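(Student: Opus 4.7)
My plan is to reduce the problem in $2^{[n]}$ to a density statement in multidimensional grids, exploiting the fact that every poset embeds into a bounded-dimensional grid.

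First, since the dimension of any poset is at most its size, $P$ embeds as an induced subposet of $[k]^d$ for $d=\dim(P)\leq |P|$ and any $k\geq |P|$. Hence a family that contains an induced copy of the full grid $[k]^d$ (with $k\geq |P|$) automatically contains an induced copy of $P$, and to find $P$ it suffices to find a dense enough substructure in a large grid. Next, I would partition $2^{[n]}$ into induced sub-grids as follows: fix $d=|P|$, split $[n]=B_1\sqcup\cdots\sqcup B_d$ into blocks of equal size, represent each $A\subseteq[n]$ by the tuple $(A\cap B_1,\dots,A\cap B_d)$, apply a symmetric chain decomposition inside each $2^{B_i}$, and take products of chains chosen one per block. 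Each product $\mathcal{C}^{(1)}\times\cdots\times\mathcal{C}^{(d)}$ is an induced sub-grid of $2^{[n]}$, and together they partition $2^{[n]}$. Typical sides are of order $\sqrt{n/d}$, much larger than $|P|$, and the sum of the widths of these grids is within a factor depending only on $d$ of $\binom{n}{\lfloor n/2\rfloor}$.

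The heart of the argument would then be a grid density lemma: for every poset $P$ and every $d$ there exists $K=K(P,d)$ such that any induced-$P$-free subfamily of $[N_1]\times\cdots\times[N_d]$, with $\min_i N_i\geq |P|$, has size at most $K$ times the width of the grid. I would prove this by induction on $d$. The base case $d=1$ is essentially Erd\H{o}s's generalization of Sperner. For the inductive step, slice the grid along the last coordinate into $(d-1)$-dimensional layers. If every layer is sparse, induction closes the bound. Otherwise many consecutive layers must share a large common substructure, producing a vertical chain direction that, combined with an induced embedding $P\hookrightarrow [|P|]^{\dim(P)}$ from the first step, should realize an induced copy of $P$. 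Summing this grid bound over the grid partition constructed above yields $\La^{\#}(n,P)\leq C_P\binom{n}{\lfloor n/2\rfloor}$.

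The main obstacle is the grid density lemma, and in particular the distinction between induced and weak copies. Realising weak copies of $P$ in a dense grid is straightforward because any long chain suffices, but an induced copy requires that each pair of chosen elements simultaneously satisfies the correct comparability \emph{and} incomparability relations prescribed by $P$. I would attempt to handle this with a Ramsey-type pigeonhole on the comparability pattern between candidate layers and between representative elements chosen from each layer, extracting a sub-configuration that respects the embedding of $P$ into $[|P|]^{d}$. Controlling the loss in this Ramsey step is what will dictate how large the constant $C_P$ must be, and any improvement there (as the present paper pursues) would naturally reflect additional structural information about $P$ such as its height.
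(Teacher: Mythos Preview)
Your overall reduction---partition $2^{[n]}$ into products of chains (i.e.\ grids), bound induced-$P$-free families inside each grid, and sum---is exactly the scheme used both by Methuku--P\'alv\"olgyi and in this paper (see Corollary~\ref{biggrid} and Theorem~\ref{gridtogrid}). The gap is entirely in your ``grid density lemma,'' and it is a real one.

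The induction on $d$ cannot get started. Your base case $d=1$ is vacuous unless $P$ is itself a chain: if $P$ is not a chain then $[N]$ contains no induced copy of $P$, so the whole chain is induced-$P$-free while the width of $[N]$ is $1$, and the claimed bound fails. More generally the lemma is false for every $d<\dim(P)$, since $[N]^{d}$ then contains no induced copy of $P$ at all. Thus the first dimension at which the statement can hold is $d=\dim(P)$, and there you have no usable inductive hypothesis on $d-1$. All the content of the theorem therefore lives in proving the grid bound \emph{directly} at some fixed $d\ge\dim(P)$, not by descent on $d$.

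This is exactly where the hard work is, and a ``Ramsey-type pigeonhole on comparability patterns between layers'' is not known to do the job. Methuku--P\'alv\"olgyi (as sketched in Section~\ref{sect:overview}) encode an induced-$P$-free family in $[l]^{d}$ as a $d$-dimensional $0$--$1$ matrix avoiding a permutation pattern determined by $P$, and then invoke the Marcus--Tardos theorem (and its higher-dimensional extension by Klazar--Marcus) to obtain $|\F|\le K_{P}\,l^{d-1}$; this is a genuinely nontrivial input, not a soft Ramsey step. The present paper replaces that black box by a direct, more poset-aware argument (the interpolation $P_{0},\dots,P_{q}$ together with Lemmas~\ref{densitylemma} and~\ref{mainlemma}), which is what yields the improved dependence on $|P|$. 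Either route, the grid bound at the critical dimension is the heart of the matter, and your proposal does not yet supply one.
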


 However, the constant $C_{P}$ produced by their proof is of the form $2^{d}K_{P}$, where $d$ is the dimension of $P$, and the constant $K_{P}$ comes from a forbidden matrix pattern problem and is also typically exponential in the size of $P$, even for posets of height two. (We shall discuss this in more detail in Section \ref{sect:matrixpatterns} and Section \ref{sect:overview}.) Later, M\'{e}roueh \cite{M} proved that the Lubell mass of any (induced $P$)-free family $\F\subset 2^{[n]}$ is also bounded by a constant $C'_{P}$, but this constant is also expontial in $|P|$.  On the other hand, it is not known whether $C_{P}$ can be chosen to be linear in $|P|$. 

The goal of our paper is to show that if the height of $P$ is a constant, then $\La^{\#}(n,P)/\binom{n}{\lfloor n/2\rfloor}$ is bounded  by a polynomial in $|P|$. Due to the nature of our proof method, our results immediately generalize to yield bounds on the size of (induced $P$)-free families in grids as well, which might be of independent interest. Thus, to be able to state our results in their full generality, we shall first define the notion of grids and cartesian product of posets.

\begin{definition}
If $P_{1}=(X_{1},\leq_{1}),\dots,P_{n}=(X_{n},\leq_{n})$ are partially ordered sets, their \emph{cartesian product}, denoted by $P_{1}\times\dots \times P_{n}$, is the poset $P=(X,\leq)$, where $X=X_{1}\times\dots\times X_{n}$ and for $(x_{1},\dots,x_{n}),(y_{1},\dots,y_{n})\in X$ we have $(x_{1},\dots,x_{n})\leq (y_{1},\dots,y_{n})$ if $x_{i}\leq y_{i}$ holds for $i\in [n]$. 	
\end{definition}

\begin{definition}
Let $k_{1},\dots,k_{n}$ be positive integers larger than $1$. The cartesian product $[k_{1}]\times\dots\times [k_{n}]$ is endowed with a natural point-wise ordering $\preceq$: if $(a_{1},\dots,a_{n}),(b_{1},\dots,b_{n})\in [k_{1}]\times\dots\times [k_{n}]$, then $(a_{1},\dots,a_{n})\preceq (b_{1},\dots,b_{n})$ if $a_{1}\leq b_{1}$,\dots,$a_{n}\leq b_{n}$. We shall refer to this poset structure (and every poset isomorphic to it) as an \emph{$n$-dimensional grid}. The \emph{sides} of the grid $[k_{1}]\times\dots [k_{d}]$ are $k_{1},\dots,k_{d}$. If $k_{1}=\dots=k_{n}=k$, we shall write $[k]^{n}$ instead of $[k_{1}]\times\dots\times [k_{n}]$.	
\end{definition} 

  Note that the Boolean lattice $2^{[n]}$ is isomorphic to $[2]^{n}$, and the cartesian product of $n$ chains is a $n$-dimensional grid. If $C_{1},\dots,C_{n}$ are chains, the function $\pi:C_{1}\times\dots\times C_{n}\rightarrow [|C_{1}|]\times\dots\times [|C_{n}|]$ defined by $\pi((c_{1},\dots,c_{n}))=(l_{1},\dots,l_{n})$, where $c_{i}$ is the $l_{i}$-th smallest element of the chain $C_{i}$ for $i\in [n]$, is called the \emph{natural bijection}. Also, note that the dimension of a poset $P$ is equal to the least positive integer $d$ such that $P$ is an induced subposet of $[|P|]^{d}$.
  
Forbidden subposet problems in grids are less studied. If $P$ is a chain of size $l$, then it is a simple consequence of the so called normalized matching property of $[k]^{n}$ that any $P$-free family $\F\subset [k]^{n}$ has size at most $(l-1)w$, where $w$ is the width of $[k]^{n}$, see \cite{A} for the related definitions and results. The author of this paper \cite{T2} proved the following general result, which extends Theorem \ref{thm1}: for any poset $P$, there exists a constant $C_{P}$ such that if $k,n$ are positive integers, where $n$ is at least the dimension of $P$, then any (induced $P$)-free family $\F\subset [k]^{n}$ satisfies $|\F|\leq C_{P}w$, where $w$ is the width of $[k]^{n}$. 
  
 \subsection{Our results} 
  
  Now let us state the main theorem of this manuscript, which not only implies the previous result for large $n$, but it gives a bound on the constant $C_{P}$ in terms of the size and the height of $P$, which can be viewed as an induced analog of Theorem \ref{thm2}.

\begin{theorem}\label{mainthm}
	For any positive integer $h$ there exists a constant $c_{h}$ such that the following holds. Let $k$ and $n$ be positive integers, $w$ be the width of $[k]^{n}$, and let $P$ be a poset of height $h$ such that $n>2|P|$. If the family $\F\subset [k]^{n}$  does not contain an induced copy of $P$, then $$|\F|\leq |P|^{c_{h}}w.$$
	In particular,
	$$\La^{\#}(n,P)\leq |P|^{c_{h}}\binom{n}{\lfloor n/2\rfloor}$$
    for $n$ sufficiently large.
\end{theorem}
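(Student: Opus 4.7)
The proof proceeds by induction on the height $h$ of $P$, using the partition-into-sub-grids lemma advertised in the abstract as the main new tool. For the base case $h=1$, $P$ is an antichain, so any (induced $P$)-free $\F\subset[k]^n$ has width strictly less than $|P|$; Dilworth's theorem together with the normalized matching property (LYM inequality) of $[k]^n$ gives $|\F|\leq(|P|-1)w$, so $c_1=1$ works.

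For the inductive step, assume the claim for height $h-1$ with constant $c_{h-1}$, let $P$ have height $h$, and write $P=T\cup P'$, where $T$ is the antichain of maximal elements of $P$ and $P'=P\setminus T$ has height $h-1$. The first main step is to construct, when $n>2|P|$, a partition $[k]^n=G_1\sqcup\cdots\sqcup G_N$ into sub-grids each isomorphic to a product $[k^{(i)}_1]\times\cdots\times[k^{(i)}_d]$ for some fixed $d=d(h)$, with every side $k^{(i)}_j\geq|P|$, and in such a way that the widths satisfy an inequality of the form $\sum_i w(G_i)\leq C_h\,w$. Producing such a partition (iterating a refined symmetric chain decomposition on blocks of coordinates) is the combinatorial heart of the paper and the step whose interest is emphasized in the abstract.

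Given such a partition, it suffices to prove $|\F\cap G_i|\leq|P|^{c_h'}\,w(G_i)$ on each sub-grid. Since $G_i$ has dimension bounded by $d(h)$, a forbidden-matrix-pattern extraction specialized to this bounded dimension applies in a form whose parameters are polynomial, not exponential, in $|P|$: the reduced ambient dimension allows matrix patterns whose sizes depend only on $|T|\leq|P|$, rather than on the dimension of $P$. Extracting such a pattern from $\F\cap G_i$ yields a structured configuration of candidate top elements $x_1,\dots,x_{|T|}$ with a large common downset in $\F\cap G_i$; applying the induction hypothesis for height $h-1$ to that downset produces an induced copy of $P'$ correctly placed below the $x_j$'s, and hence an induced copy of $P$ in $\F$, contradicting the hypothesis.

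Summing the per-sub-grid estimates against the width-compatibility of the partition yields $|\F|\leq|P|^{c_h}w$ for an appropriate $c_h$; the statement for $\La^{\#}(n,P)$ follows by taking $k=2$ and recalling $w=\binom{n}{\lfloor n/2\rfloor}$. The main obstacle is keeping \emph{every} step polynomial in $|P|$: this forces the sub-grid dimension $d(h)$ to be truly independent of $|P|$, and the matrix-pattern extraction inside each $G_i$ must avoid the exponential blow-up that afflicts Methuku--P\'alv\"olgyi's argument when performed in the full Boolean lattice. Balancing these constraints while carefully tracking the exponent $c_h$ through the induction (so that $c_h$ depends only on $h$, not on $|P|$) is the principal technical difficulty.
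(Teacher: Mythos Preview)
Your plan has a structural obstruction that cannot be repaired without changing the overall strategy. You insist that the sub-grid dimension $d=d(h)$ depend only on $h$, and you even flag this as essential (``this forces the sub-grid dimension $d(h)$ to be truly independent of $|P|$''). But the \emph{order-dimension} of a height-$2$ poset can be as large as $|P|/2$ (the standard examples $S_m$), so for any fixed $d=d(2)$ there are height-$2$ posets $P$ that do not embed as an induced subposet into \emph{any} $d$-dimensional grid, no matter how large the sides are. For such $P$, every family in every $G_i\cong[k^{(i)}_1]\times\cdots\times[k^{(i)}_d]$ is trivially (induced $P$)-free, so no ``matrix-pattern extraction specialized to this bounded dimension'' can yield a nontrivial bound on $|\F\cap G_i|$, and the per-block estimate collapses to $|\F\cap G_i|\leq|G_i|$. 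The induction on $h$ then has nothing to feed on: you never produce the candidate top elements $x_1,\dots,x_{|T|}$ with a large common downset, because the grid $G_i$ simply cannot host an induced copy of $P$.

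The paper proceeds quite differently. It partitions $[k]^n$ into grids of dimension $d=2|P|$ (so $d$ \emph{does} depend on $|P|$), and the whole point is that the reduction lemma only costs a factor $O(\sqrt{d})=O(\sqrt{|P|})$, which is harmless. The polynomial bound is then obtained not by induction on $h$ and not via black-box pattern results, but by interpolating between the complete multilevel poset $K_{|A_1|,\dots,|A_h|}$ and $P$ through a sequence $P_0,P_1,\dots,P_q$ (correcting the comparabilities of one vertex at a time, $q\leq|P|$), and proving a Marcus--Tardos-style bound for each $P_l$ in $[k]^{d_0+l+1}$ by a multiscale block argument whose ratio $C_{l+1}/C_l$ is $1+O(h/|P|)$; after $q\leq|P|$ steps this stays polynomial in $|P|$ with exponent depending only on $h$. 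A separate density lemma, giving a weaker $O_{|P|}(k^{d-1/h})$ bound, is needed to handle blocks with large projection. None of this matches your outline: the sub-grid dimension must grow with $|P|$, the induction is on the interpolation index $l$ rather than on $h$, and there is no call to the height-$(h-1)$ case. (Incidentally, your base case $h=1$ is stated incorrectly: Dilworth plus LYM does not give $|\F|\leq(|P|-1)w$ for chain-covered families---a single chain in $[k]^n$ already has size $n(k-1)+1$, not $\leq w$---though a correct bound with some $c_1$ is easy.)
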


Unfortunately, the dependence of $c_{h}$ on $h$ we are able to prove is quite poor, we get that $c_{h}=2^{O(h\log h)}$. In case $h=2$, our proof yields the following bound, which depends asymmetrically on the two vertex classes of $P$. Here, the two vertex classes of $P$ refer to the set of maximal and minimal elements of $P$.

\begin{theorem}\label{bipartitethm}
	Let $P$ be a poset of height $2$ and let $a\leq b$ be the sizes of the  two vertex classes of $P$. Let $k$ and $n$ be positive integers such that $n>a+\Theta(\log b)$, and let $w$ be the width of $[k]^{n}$. If the family $\F\subset [k]^{n}$  does not contain an induced copy of $P$, then $$|\F|= a^{O(1)}(\log b)^{O(1)}w.$$
	In particular,
	$$\La^{\#}(n,P)= a^{O(1)}(\log b)^{O(1)}\binom{n}{\lfloor n/2\rfloor}$$
    for $n$ sufficiently large.	 
\end{theorem}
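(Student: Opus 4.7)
The plan is to adapt the two-step framework advertised in the abstract for the main theorem, consisting of (i) partitioning $[k]^n$ into fixed-dimensional grids with large sides, and (ii) bounding induced-$P$-free subfamilies inside each grid, and to tune each step so as to split the dependence into an $a^{O(1)}$ factor coming from the $a$ minimum elements of $P$ and a much milder $(\log b)^{O(1)}$ factor coming from the $b$ maximum elements.

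For (i), I would partition $[k]^n$ into grids of dimension $d=\lceil C\log b\rceil$ by splitting the $n$ coordinates into $d$ blocks of size $\Theta(n/d)$, applying a symmetric chain decomposition inside each block (the standard construction for $[k]^{n/d}$), and taking products of the resulting chains. The hypothesis $n>a+\Theta(\log b)$ guarantees that $n/d$ is at least a constant, so the sides are large. The key property of this partition is that concatenating the chain covers of the individual pieces produces a chain cover of $[k]^n$, so $\sum_G w_G\le w$ (with equality for this construction), where the sum ranges over the pieces $G$ of the partition and $w_G$ denotes the width of $G$.

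For (ii), I would aim to prove that every induced-$P$-free family $\F_G\subseteq G$ satisfies $|\F_G|\le a^{O(1)}(\log b)^{O(1)}\, w_G$. The $a^{O(1)}$ factor would come from a Mirsky/Dilworth-style selection of a chain of length at least $a$ inside $\F_G$, which supplies the $a$ minimum elements of a candidate induced copy of $P$. The $(\log b)^{O(1)}$ factor would come from using the $d=\Theta(\log b)$ coordinate directions of $G$ to distinguish the $b$ maximum elements of $P$, each of which is uniquely specified by its down-set in $\{m_1,\dots,m_a\}$. Summing the in-grid bound over all pieces then yields $|\F|\le a^{O(1)}(\log b)^{O(1)}\sum_G w_G\le a^{O(1)}(\log b)^{O(1)}\, w$, which is the conclusion of the theorem.

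The main obstacle is step (ii), the in-grid bound. The naive argument bounds the number of ``incomparability types'' in a $d$-dimensional grid by $k^d$, which may be much larger than $b$ and gives only exponential-in-$d$ dependence; this is too weak. To obtain the required $(\log b)^{O(1)}$ factor one must use that a height-$2$ poset on $a+b$ elements is fully specified by its $a\times b$ bipartite incidence matrix, and reduce the in-grid question to a forbidden matrix pattern problem. The improvement over Methuku--P\'{a}lv\"{o}lgyi then reduces to showing an ``asymmetric Marcus--Tardos''-type bound: the extremal function for avoiding any $a\times b$ bipartite matrix pattern is polynomial in $a$ and polylogarithmic in $b$ per unit of width. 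Establishing this asymmetric bound—ultimately exploiting that for fixed $a$ there are only $2^a$ possible columns of the $a\times b$ pattern, so many of the $b$ columns repeat—is the heart of the argument and the place where the height-$2$ hypothesis does its real work.
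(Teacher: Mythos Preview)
Your high-level framework---partition $[k]^n$ into $d$-dimensional subgrids and bound $|\F|$ via an in-grid extremal bound, as in the paper's Theorem~\ref{gridtogrid}---is the right scaffolding, but step~(ii), the in-grid bound, is where all the content lies, and your sketch has real gaps there.

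First, the dimension $d=\lceil C\log b\rceil$ is too small. A height-$2$ poset with $a$ minimal and $b$ maximal elements can have order dimension as large as $a$ (take $P$ to contain the standard example $S_a$); when $a\gg\log b$, say $a=b$, the poset $P$ does not embed as an induced subposet of $[m]^{\Theta(\log b)}$ for any $m$. Then every $\F_G\subseteq G$ is vacuously induced-$P$-free and no bound of the form $|\F_G|\le C\,w_G$ with $C$ independent of $m$ can hold. The paper takes $d=a+\Theta(\log b)$; this is precisely why the hypothesis reads $n>a+\Theta(\log b)$ rather than $n>\Theta(\log b)$.

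Second, even with the right $d$, your step~(ii) is not an argument. The $a$ minimal elements of $P$ form an antichain, so ``a chain of length $a$'' cannot supply them. The proposed reduction to an ``asymmetric Marcus--Tardos'' bound conflates two unrelated objects: the $a\times b$ biadjacency matrix is a $2$-dimensional encoding of $P$, whereas the in-grid extremal problem lives in a $d$-dimensional grid. The Methuku--P\'alv\"olgyi passage from posets to patterns goes through the \emph{order dimension} of $P$, not its height, producing a $\dim(P)$-dimensional permutation pattern of side $|P|$---for which Fox's lower bounds show the extremal constant is genuinely exponential; the fact that the $b$ columns of the biadjacency matrix have at most $2^a$ types does not, by itself, bypass this. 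The paper's in-grid argument avoids matrix patterns entirely: it interpolates from $P_0=K_{a,b}$ to $P_a=P$, correcting the comparabilities above one minimal element per step at the cost of one extra grid dimension (Lemmas~\ref{densitylemma} and~\ref{mainlemma}). The $\Theta(\log b)$ dimensions enter only in the base case for $K_{a,b}$ (Lemma~\ref{strongmultipartite}), and tracking the recursion with $q=a$ steps and $s_1=a^{O(1)}(\log b)^{O(1)}$ is what yields the constant $a^{O(1)}(\log b)^{O(1)}$.
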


As a byproduct of the proof of Theorem \ref{mainthm}, we get a short proof of a bound on the size of (weak $P$)-free families in grids as well, which is only a slightly worse in the case of the Boolean lattice than the one in Theorem \ref{thm2}.

\begin{theorem}\label{weakgrid}
	Let $P$ be a poset of height $h$ and let $k$ and $n$ be positive integers such that $n\geq 2\log_{2}|P|$. Also, let $w$ be the width of $[k]^{n}$. If the family $\F\subset [k]^{n}$ does not contain a weak copy of $P$, then 
	$$|\F|\leq O(wh\log^{3/2} |P|).$$
	In particular,
	$$\La(n,P)\leq O\left(\binom{n}{\lfloor n/2\rfloor}h\log^{3/2}|P|\right).$$
\end{theorem}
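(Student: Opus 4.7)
My plan is to combine the grid-partition lemma that the author develops elsewhere in the paper (alluded to in the abstract as the ``small part'' of the proof) with a random maximal chain argument inside each sub-grid. First, I would apply the partition to write $[k]^n$ as a disjoint union of sub-grids $G_1,\dots,G_t$ of some fixed dimension $d$ with large sides, arranged so that $\sum_i w(G_i)$ is essentially equal to $w$. Since the restriction of a weak-$P$-free family to each $G_i$ is still weak-$P$-free, this reduces the theorem to showing that every weak-$P$-free $\mathcal{G}\subseteq G$ inside a single sub-grid satisfies $|\mathcal{G}|\leq O(h\log^{3/2}|P|)\,w(G)$, after which summation over $i$ concludes.

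Inside a fixed sub-grid $G=[k_1]\times\cdots\times[k_d]$ I would pick a uniformly random maximal chain $C$. Since every grid satisfies the normalized matching property, every element of rank $r$ lies on $C$ with equal probability $1/|L_r(G)|$, which yields $\mathbb{E}[|\mathcal{G}\cap C|]\geq |\mathcal{G}|/w(G)$. So the task becomes bounding $\mathbb{E}[|\mathcal{G}\cap C|]$ by $O(h\log^{3/2}|P|)$ for any weak-$P$-free $\mathcal{G}$. The naive chain bound says only that $|\mathcal{G}\cap C|\leq |P|-1$ (a chain of length $|P|$ is itself a weak copy of $P$), which is far too weak for the desired conclusion.

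To improve the chain bound to polylog in $|P|$, I would exploit the extra freedom provided by the $d$ coordinates of $G$ and the large sides. The idea is that if $|\mathcal{G}\cap C|$ is much larger than $h\log^{3/2}|P|$, then by a dyadic grouping along $C$ one can pick $h$ anchor elements with large rank gaps between them. At each gap, averaging over chains obtained by perturbing $C$ in the remaining coordinates produces many parallel elements of $\mathcal{G}$ at matching ranks. Aligning these parallel elements with a height decomposition $P=\bigsqcup_{i=1}^h A_i$ into antichains yields a weak copy of $P$, contradicting weak-$P$-freeness. The hypothesis $n\geq 2\log_2|P|$ is used precisely to guarantee that the sub-grid dimension can be taken large enough for the dyadic perturbation to host $|P|$ elements across the $h$ layers.

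The main obstacle, and the technical heart of the proof, is this last step: converting a long chain intersection together with the grid's dimensional freedom into an actual weak copy of $P$. The slightly suboptimal factor $\sqrt{\log|P|}$ relative to the Gr\'{o}sz--Methuku--Tompkins bound of Theorem \ref{thm2} seems to be the price of the grid-partition approach: reducing to a $d$-dimensional grid with $d\approx\log|P|$ and then applying the random-chain / dyadic-perturbation bound appears to cost an additional $\sqrt{\log|P|}$ compared with the sharper direct argument available in the Boolean lattice.
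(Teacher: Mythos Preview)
Your high-level framework matches the paper's: reduce to fixed-dimensional sub-grids via the partition lemma (the paper's Corollary~\ref{biggrid} and Theorem~\ref{gridtogrid}), then bound weak-$P$-free families inside a single $d$-dimensional grid with $d=\Theta(\log|P|)$. The $\sqrt{\log|P|}$ loss indeed comes from the $\sqrt{d}$ factor in Theorem~\ref{gridtogrid}, as you suspect.

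The genuine gap is the middle step. You propose a random-chain argument together with an unspecified ``dyadic grouping / perturbation'' to convert a long chain intersection into a weak copy of $P$, and you acknowledge this as the technical heart without supplying it. As written this is not a proof: you would need to show concretely how, from many points on a random chain in $[m]^d$, one manufactures $|A_i|$ pairwise incomparable elements at each of $h$ height levels, all drawn from $\mathcal{G}$. Perturbing a chain in a few coordinates does not obviously keep you inside $\mathcal{G}$, and the averaging you allude to does not by itself produce an \emph{antichain} of the required size. Also, the bookkeeping claim $\sum_i w(G_i)\approx w$ is not what the paper uses and is not obviously true; the paper instead bounds $|\mathcal F_i|\le C|G_i|/m$ with $m=\Omega(k\sqrt{n/d})$ and sums $|G_i|=k^n$.

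The paper's actual argument for the sub-grid step is entirely different and self-contained. It observes that any height-$h$ poset $P$ is a weak subposet of the complete multilevel poset $K^h_{|P|}$, and proves directly (Lemma~\ref{strongmultipartite}) that a family in $[m]^d$ with no \emph{strong} copy of $K^h_r$ has size at most $4d(h-1)m^{d-1}$, provided $2^{d-2}/(d+1)\ge r$. The proof is a Marcus--Tardos style recursion: partition $[m]^d$ into $2\times\cdots\times 2$ blocks; the set of nonempty blocks again avoids a strong $K^h_r$ (induction), while any block with at least $r(d+1)$ points contains an antichain of size $r$, so the set of such ``heavy'' blocks cannot contain a strong chain of length $h$ (bounded by Lemma~\ref{strongchain}, which gives $d(h-1)m^{d-1}$). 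Setting $d=\lceil 2\log_2|P|\rceil+5$ makes $C=4d(h-1)=O(h\log|P|)$, and Theorem~\ref{gridtogrid} then yields $|\mathcal F|=O(C\sqrt{d}\,w)=O(wh\log^{3/2}|P|)$. No random chains or perturbations are used.
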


Finally, let us remark that in Section \ref{sect:partition}, we present a general technique which might be used to derive bounds for other extremal set theoretical problems as well, and for their generalizations in grids. See Section \ref{sect:applications} for two applications, one involving Boolean algebras, and one on the size of families not containing two distinct sets and their union.

\subsection{Matrix patterns}\label{sect:matrixpatterns}
Let us describe the matrix pattern problem mentioned in the Introduction. The topic discussed in this section is not necessary for our results, but it shares a lot of similarity with the topic of forbidden subposet problems. Also, our proof ideas are partially motivated by the techniques used in this area.

 A \emph{$d$-dimensional matrix pattern} is a $d$-dimensional $0-1$ matrix. If $A$ and $M$ are $d$-dimensional matrix patterns, we say that $M$ \emph{contains} $A$, if we can change some $1$ entries of $M$ to $0$ so that the resulting matrix contains $A$ as a submatrix. We say that $M$ \emph{avoids} $A$ if $M$ does not contain $A$. The \emph{weight} of a matrix $M$ is the number of $1$-s in $M$ and is denoted by $\omega(M)$. A $d$-dimensional \emph{permutation pattern} is a $d$-dimensional matrix pattern $A$ such that every $(d-1)$-dimensional axis-parallel hyperplane of $A$ contains exactly one $1$ entry.  Finally, if $A$ is a $d$-dimensional matrix pattern, let 
$$\ex(n,A)=\max\{\omega(M): M\mbox{ is an }n\times\dots\times n\mbox{ sized }d\mbox{-dimensional matrix pattern avoiding } A\}.$$

The celebrated result of Marcus and Tardos \cite{MT} is that if $A$ is a $k\times k$ sized $2$-dimensional permutation pattern, then there exists a constant $K_{A}$ such that $\ex(n,A)\leq K_{A}n$. It was proved by Fox \cite{Fox} that $K_{A}=2^{O(k)}$, and he proved that $K_{B}=2^{\Omega(k^{1/2})}$ for certain $k\times k$ sized permutation patterns $B$. These results were generalized to higher dimensional permutation patterns by Klazar and Marcus \cite{KM} and Geneson and Tian \cite{GT}. In the latter paper, it is proved that for any $k\times\dots\times k$ sized $d$-dimensional permutation pattern $A$, we have $\ex(n,A)=2^{O_{d}(k)}n^{d-1}$, and there exist $k\times\dots\times k$ sized $d$-dimensional permutation patterns $B$ such that $\ex(n,A)=2^{\Omega_{d}(k^{1/d})}n^{d-1}$. Here, and in the rest of our paper, $O_{p_{1},\dots,p_{s}}(.)$, $\Omega_{p_{1},\dots,p_{s}}(.)$ and $\Theta_{p_{1},\dots,p_{s}}(.)$ mean that the constant hidden in the notation $O(.)$, $\Omega(.)$ and $\Theta(.)$ may depend on the parameters $p_{1},\dots,p_{s}$.

\subsection{Overview of the proof}\label{sect:overview}

To motivate the proof of  Theorem \ref{mainthm}, we shall briefly sketch the proof of Theorem \ref{thm1} by Methuku and P\'{a}lv\"{o}lgyi \cite{MP}. Firstly, they note the following correspondence between forbidden subposet problems and forbidden matrix pattern problems. If $P$ is a $d$-dimensional poset, then there is a $|P|\times\dots\times|P|$ sized $d$-dimensional permutation pattern $A$ such that every (induced $P$)-free family $\F\subset [l]^{d}$ corresponds to an $l\times\dots\times l$ sized $d$-dimensional matrix pattern $M$ avoiding $A$ and satisfying $\omega(M)=|\F|$. But then there exists some constant $K_{P}$ such that $|\F|\leq K_{P}l^{d-1}$. Secondly, using a certain averaging argument in $2^{[n]}$, they prove that having the bound $|\F|\leq K_{P}l^{d-1}$ for (induced $P$)-free families in $[l]^{d}$ implies that $\La^{\#}(n,P)\leq 2^{d}K_{P}\binom{n}{\lfloor n/2\rfloor}$.

Now let us sketch our proof of Theorem \ref{mainthm} in the light of the previous reasoning. First, using a certain partitioning argument, we show that if someone can find two constants $d$ and $K_{P}$ such that for every $l\in \mathbb{Z}^{+}$ and every (induced $P$)-free family $\F'\subset [l]^{d}$, we have $|\F'|\leq K_{P}l^{d-1}$, then  $|\F|\leq O(\sqrt{d}K_{P}w)$ holds for every (induced $P$)-free family $\F\subset[k]^{n}$, where $n\geq d$ and $w$ is the width of $[k]^{n}$. This can be found in Section \ref{sect:partition}. We remark that the same idea was also presented in \cite{T2} by the author of this paper, but in a less general form and with a weaker quantitative bound.

Then, the hearth of the proof is in Section \ref{sect:mainthm}, where we show that we can choose $d\leq |P|+O(\log |P|)$ and $K_{P}=|P|^{c(h)}$, where $c(h)$ is a function depending only on $h$, the height of $P$.

\section{Partitioning into grids}\label{sect:partition}

\subsection{Partitioning lemma}

The following estimate on the width of $[k]^{n}$, which can be found on p.63-68 in \cite{A}, will come in handy later.

\begin{lemma}\label{estimate}
	Let $k,n$ be positive integers such that $k\geq 2$. Then the width of $[k]^{n}$ is $\Theta(k^{n-1}/\sqrt{n}).$	
\end{lemma}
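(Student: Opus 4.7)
My plan is to split the argument into two independent pieces: identifying the width of $[k]^n$ with the size of its largest rank level, and estimating that rank level via a local central limit / generating function computation.

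First I would argue that the width of $[k]^n$ equals the size of its middle rank level, where the \emph{rank} of $(a_1,\dots,a_n)\in[k]^n$ is $a_1+\cdots+a_n$. Each rank level is an antichain, so the width is at least the size of the largest level. For the matching upper bound I would invoke the fact that $[k]^n$, being the cartesian product of chains, admits a symmetric chain decomposition (a classical result of de Bruijn--Tengbergen--Kruyswijk, extended to products of chains; this is exactly the content of the pages of \cite{A} cited). The number of chains in any such decomposition equals the size of the largest level, hence by Dilworth's theorem the width is at most that quantity. Alternatively, one can apply the normalized matching property of $[k]^n$ coupled with an LYM-type inequality: if $\F$ is an antichain and $f_r$ denotes the number of elements of $\F$ of rank $r$, then $\sum_r f_r/N_r\le 1$, where $N_r$ is the total number of elements of rank $r$; since $|\F|=\sum_r f_r\le (\max_r N_r)\sum_r f_r/N_r\le \max_r N_r$, the width is at most the largest level size.

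Second, I would estimate the largest level size. Writing $b_i=a_i-1\in\{0,1,\dots,k-1\}$, the number of elements of $[k]^n$ of rank $n+s$ equals the coefficient of $x^s$ in the generating function
\[
(1+x+x^2+\cdots+x^{k-1})^n.
\]
The coefficients form a unimodal, symmetric sequence (a convolution of $n$ copies of the uniform distribution on $\{0,\dots,k-1\}$), so their maximum is attained near $s=n(k-1)/2$. I would estimate this maximum coefficient via a local central limit theorem for the sum $S_n=b_1+\cdots+b_n$ of $n$ i.i.d.\ uniform random variables on $\{0,\dots,k-1\}$: one has
\[
\Pr[S_n=s]=\frac{1}{\sigma\sqrt{2\pi n}}e^{-(s-\mu n)^2/(2n\sigma^2)}(1+o(1))
\]
uniformly in $s$, where $\mu=(k-1)/2$ and $\sigma^2=(k^2-1)/12=\Theta(k^2)$. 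Taking $s$ at the mode and multiplying by $k^n$ (the total weight of the generating function) yields a maximum coefficient of order $k^n/(\sigma\sqrt{n})=\Theta(k^{n-1}/\sqrt{n})$, which gives the claimed bound.

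The main obstacle is being careful about uniformity: the local central limit theorem must be applied with constants that do not depend badly on $k$. To avoid that worry, I would instead give a direct, elementary proof of the estimate using Stirling's formula applied to the generating function, or use the standard Gaussian comparison argument for lattice distributions with bounded support, exploiting that the characteristic function of the uniform distribution on $\{0,\dots,k-1\}$ has modulus bounded away from $1$ off a neighbourhood of $0$ (scaled suitably in $k$). Since the normalizing constant $\sigma=\Theta(k)$, this yields the $\Theta(k^{n-1}/\sqrt{n})$ bound with constants that are absolute, independent of $k$ and $n$, which is exactly what the lemma asserts.
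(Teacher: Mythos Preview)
The paper gives no proof of this lemma at all: it simply cites p.~63--68 of Anderson's textbook \cite{A} and moves on. Your proposal therefore goes well beyond what the paper does, and the outline you give is correct. The identification of the width with the largest rank level via a symmetric chain decomposition (or, equivalently, via the normalized matching property and an LYM inequality) is exactly the classical argument treated in Anderson, and your second step---reading off the largest level as the maximal coefficient of $(1+x+\cdots+x^{k-1})^n$ and estimating it probabilistically as $k^n$ times the mode of an i.i.d.\ sum of uniforms on $\{0,\dots,k-1\}$---is the right way to get the $\Theta(k^{n-1}/\sqrt n)$ asymptotics.

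Your caution about uniformity in $k$ is well placed and is the only point that requires genuine care. The lower bound is painless: Chebyshev's inequality (with variance $n(k^2-1)/12$) concentrates a fixed fraction of the total mass $k^n$ on an interval of length $O(k\sqrt n)$, and unimodality then gives a level of size $\Omega(k^{n-1}/\sqrt n)$ with absolute constants. For the upper bound, the characteristic-function route you suggest works cleanly: writing the maximal coefficient as $\frac{k^n}{2\pi}\int_{-\pi}^{\pi}\bigl|\sin(k\theta/2)/(k\sin(\theta/2))\bigr|^n\,d\theta$, one uses the elementary bounds $\bigl|\sin(k\theta/2)/(k\sin(\theta/2))\bigr|\le \exp(-c k^2\theta^2)$ on $|\theta|\le \pi/k$ and $\bigl|\sin(k\theta/2)/(k\sin(\theta/2))\bigr|\le C/(k|\theta|)<1$ on $\pi/k\le|\theta|\le\pi$, both with absolute constants. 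Integrating yields $O(1/(k\sqrt n))$ uniformly in $k\ge 2$, hence the desired $O(k^{n-1}/\sqrt n)$.
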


The first idea in the proof of Theorem \ref{mainthm}  is the following heuristic, which can applied in similar problems as well. Instead of trying to bound the size of the maximal (induced $P$)-free family in $[k]^{n}$, where we think of $k$ as fixed and $n$ is tending to infinity (such as the Boolean lattice), we try to find a bound for the same question in $[l]^{d}$, where we think of $d$ as fixed and $l$ is tending to infinity. The following partitioning type lemma and its corollary is the key in connecting these two problems. This lemma was proved in a more general form by the author of this paper in \cite{T}, and in this special form in \cite{T2}. 

\begin{lemma}\label{bigchain}
	Let $k,n$ be positive integers and let $w$ be the width of $[k]^{n}$. Then $[k]^{n}$ can be partitioned into $w$ chains such that the size of each chain is at least $\Omega(k^{n}/w)=\Omega(k\sqrt{n})$.
\end{lemma}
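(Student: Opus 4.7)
The plan hinges on two features of $[k]^n$: its normalized matching property (the LYM inequality for products of chains) and the sharp concentration of its rank sizes near the middle encoded in Lemma~\ref{estimate}. Rank $[k]^n$ by $\mathrm{rk}(a_1,\dots,a_n)=\sum_i a_i-n$, with ranks $0,\dots,N$ where $N=n(k-1)$, and let $N_r$ denote the size of level $r$. The sequence $(N_r)$ is the coefficient sequence of $\prod_{i=1}^n(1+x+\dots+x^{k-1})$, hence symmetric, log-concave, and bell-shaped, with maximum $w=N_{\lfloor N/2\rfloor}=\Theta(k^{n-1}/\sqrt n)$. A local-CLT type estimate furnishes an interval $I=[r_1,r_2]$ of length $\ell=\Theta(k\sqrt n)$ around the mode on which $N_r=\Theta(w)$ uniformly, and one may enlarge the constant hidden in $\ell$ to make the complementary ``cap'' $[k]^n\setminus S$ (where $S=\bigcup_{r\in I}L_r$) as small a fraction of $k^n$ as desired.

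Using the normalized matching property between consecutive levels in $S$, I would first construct, via Hall's theorem applied level by level, a chain decomposition of the slab $S$ into exactly $w$ chains. The key claim---and the main difficulty---is that the matchings may be chosen compatibly so that every one of these $w$ chains stretches across essentially the whole slab, and hence has length $\Omega(\ell)$. This is a multi-level Hall problem: for every downward-closed $A\subset S$ one must ensure that the upper shadow of $A$ is large enough at every level of $I$ to accommodate the chains passing through $A$, and symmetrically for upward-closed sets. Both conditions follow from iterating LYM together with the uniform bound $N_r=\Theta(w)$ throughout $I$, which supplies the slack needed to route chains consistently across all $\ell$ levels.

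The final step extends these $w$ chains over the top and bottom caps $[k]^n\setminus S$ by one further application of normalized matching, attaching the remaining elements to the ends of the existing chains. The resulting chain decomposition of $[k]^n$ has exactly $w$ chains, each of length at least $\Omega(\ell)=\Omega(k\sqrt n)$, and $k\sqrt n=\Theta(k^n/w)$ by Lemma~\ref{estimate}.

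The main obstacle is the multi-level matching argument in the middle step: pairwise Hall conditions between adjacent levels are not on their own enough to force the chains to be long, since they permit chain decompositions that bunch up into many short chains in the middle (for instance, a naive symmetric chain decomposition already exhibits this failure mode, with a constant fraction of its chains being asymptotically shorter than $k\sqrt n$). The crucial input is the regularity of $[k]^n$---every level in the wide middle band having size $\Theta(w)$---which leaves enough room in LYM to solve all the Hall conditions simultaneously and route each chain across the full slab.
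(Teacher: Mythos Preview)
The paper does not prove this lemma; it simply cites \cite{T} (which gives the explicit lower bound $k^{n}/(2w)-1/2$) and \cite{T2}. So there is no in-paper argument to compare against directly.

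On the merits of your plan: the ingredients---normalized matching and the concentration of level sizes near the mode---are the right ones, and restricting first to a slab of height $\Theta(k\sqrt n)$ is a natural move. But the central step is a genuine gap, not a routine detail. You assert that inside the slab the level-to-level matchings can be chosen so that \emph{all} $w$ chains have length $\Omega(\ell)$, and that this ``follows from iterating LYM together with the uniform bound $N_r=\Theta(w)$''. That sentence is where the entire content of the lemma lives, and it is not justified. Iterated normalized matching hands you, at each step, a complete matching of the smaller of two adjacent levels into the larger; composing these is exactly how one builds nested or symmetric-chain-type decompositions, the very failure mode you flag in your last paragraph. The uniform bound $N_r\ge\alpha w$ does not by itself upgrade those pairwise matchings into a global decomposition with no short chains: already in tiny examples (say three levels of sizes $2,3,2$ with the obvious NMP cover relations) the greedy matching leaves a singleton chain, and the long-chain decomposition that does exist there requires a globally coordinated, non-greedy choice that your sketch does not supply. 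You have correctly located the obstacle but have not overcome it; the construction in \cite{T} that actually achieves this is substantially more involved than a one-pass Hall argument across the slab.
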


Let us remark that the exact bound appearing in \cite{T} gives $k^{n}/2w-1/2$, but it is more convenient to work with the form $\Omega(k\sqrt{n})$, which is a consequence of Lemma \ref{estimate}. The following immediate corollary of Lemma \ref{bigchain} is what we shall use.

\begin{corollary}\label{biggrid}
	Let $k,n,d$ be positive integers such that $k\geq 2$ and $n\geq d$. Let $m_{1},\dots,m_{d}$ be positive integers such that $m_{1}+\dots+m_{d}=n$ and $m_{1},\dots,m_{d}\in\{\lfloor n/d\rfloor,\lceil n/d\rceil\}$. Then $[k]^{n}$ can be partitioned into $d$-dimensional grids $G_{1},\dots,G_{s}$ such that for each $i\in [s]$, $G_{i}=C_{i,1}\times\dots\times C_{i,d}$, where $C_{i,j}\subset [k]^{m_{j}}$ is a chain of size $\Omega(k\sqrt{n/d})$ for $j\in[d]$.
\end{corollary}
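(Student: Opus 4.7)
The plan is to use the natural product decomposition of $[k]^n$ induced by any partition of the coordinate set $[n]$ into blocks of sizes $m_1, \dots, m_d$: namely, $[k]^n \cong [k]^{m_1}\times \dots\times [k]^{m_d}$. The key observation is that if each factor $[k]^{m_j}$ is partitioned into chains, then taking products of these chains across the $d$ factors yields a partition of $[k]^n$ into $d$-dimensional grids, since by definition a $d$-dimensional grid is precisely a Cartesian product of $d$ chains. So the work reduces to producing long chain partitions on each factor and then combining them.

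First I would apply Lemma \ref{bigchain} separately to each factor $[k]^{m_j}$ for $j\in [d]$, obtaining a partition of $[k]^{m_j}$ into chains $C_{1,j},\dots, C_{w_j,j}$, each of length $\Omega(k\sqrt{m_j})$. Since $m_j\in \{\lfloor n/d\rfloor,\lceil n/d\rceil\}$ and $n\geq d$, we have $m_j\geq \lfloor n/d\rfloor=\Theta(n/d)$ (the boundary case $n/d<2$ is harmless because then $m_j\geq 1$ and $\sqrt{m_j}\geq 1=\Omega(\sqrt{n/d})$). Hence each chain has size $\Omega(k\sqrt{n/d})$ uniformly in $j$.

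Next I would combine these partitions into a single partition of $[k]^n$. For every $d$-tuple of indices $(i_1,\dots,i_d)\in [w_1]\times\dots\times [w_d]$, set
$$G_{(i_1,\dots,i_d)}=C_{i_1,1}\times \dots\times C_{i_d,d}\subset [k]^{m_1}\times \dots\times [k]^{m_d}.$$
Disjointness and coverage are immediate since the $\{C_{i_j,j}\}_{i_j\in[w_j]}$ partition $[k]^{m_j}$ coordinatewise. Each $G_{(i_1,\dots,i_d)}$ is a $d$-dimensional grid by definition, and its $j$-th factor is a chain of size $\Omega(k\sqrt{n/d})$ inside $[k]^{m_j}$, as required. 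Re-indexing the tuples $(i_1,\dots,i_d)$ as $1,\dots,s$ yields the claimed partition.

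There is no genuine obstacle here; the corollary is essentially a formal tensoring of Lemma \ref{bigchain}. The only care needed is (i) the bookkeeping of the identification $[k]^n\cong [k]^{m_1}\times\dots\times [k]^{m_d}$ respecting the product order, and (ii) confirming that the chain length bound $\Omega(k\sqrt{m_j})$ coming from Lemma \ref{bigchain} simplifies to the uniform bound $\Omega(k\sqrt{n/d})$, which uses only $m_j=\Theta(n/d)$.
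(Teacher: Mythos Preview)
Your proposal is correct and follows essentially the same approach as the paper: apply Lemma~\ref{bigchain} to each factor $[k]^{m_j}$ to obtain chain partitions with chains of size $\Omega(k\sqrt{m_j})=\Omega(k\sqrt{n/d})$, and then take all products of these chains across the $d$ factors to partition $[k]^n$ into $d$-dimensional grids. The paper's proof is the same argument in slightly terser form, with the indexing roles of factor and chain swapped.
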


\begin{proof}
	By Lemma \ref{bigchain}, $[k]^{m_{j}}$ can be partitioned into chains $D_{j,1},\dots,D_{j,s_{j}}$ for $j\in [d]$ such that $$|D_{j,l}|=\Omega(k\sqrt{m_{i}})=\Omega(k\sqrt{n/d})$$ for $l\in [s_{j}]$. But then the family of the $d$-dimensional grids $$\{D_{1,l_{1}}\times\dots\times D_{d,l_{d}}\}_{(l_{1},\dots,l_{d})\in [s_{1}]\times\dots\times [s_{d}]}$$
	satisfies the conditions.
\end{proof}

Finally, the following theorem shows the connection between bounds in small dimensional grids and bounds in large dimensional grids for (induced $P$)-free and (weak $P$)-free families.

\begin{theorem}\label{gridtogrid}
	Let $P$ be a poset and $d$ be a positive integer. Suppose that the constant $C$ satisfies that for every positive integer $m$, we have $|\F'|\leq Cm^{d-1}$ for every family $\F'\subset [m]^{d}$ not containing an induced (or weak) copy of $P$. Then for every positive integer $n\geq d$ and $k$, if $\F\subset [k]^{n}$ does not contain an induced (or weak) copy of $P$, then $|\F|=O(C\sqrt{d}w)$, where $w$ is the width of $[k]^{n}$.
\end{theorem}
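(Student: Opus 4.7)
My plan is to apply Corollary \ref{biggrid} to decompose $[k]^n$ into small-dimensional product grids and invoke the hypothesis piecewise. First, I would apply Corollary \ref{biggrid} with the given $d$ and integers $m_1, \dots, m_d \in \{\lfloor n/d\rfloor, \lceil n/d\rceil\}$ summing to $n$, producing a partition $[k]^n = G_1 \sqcup \cdots \sqcup G_s$ where each $G_i = C_{i,1} \times \cdots \times C_{i,d}$ is a product of chains $C_{i,j} \subset [k]^{m_j}$, each of length $\Omega(k\sqrt{n/d})$.

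Next, via the natural bijection I would identify each $G_i$ with the rectangular grid $[|C_{i,1}|] \times \cdots \times [|C_{i,d}|]$, which sits as an induced subposet inside the cube $[M_i]^d$ with $M_i := \max_j |C_{i,j}|$. Since $\F \cap G_i$ still avoids the relevant type (induced or weak) of copy of $P$, and such copies are preserved under the natural bijection and under the embedding into $[M_i]^d$, the hypothesis yields $|\F \cap G_i| \leq C M_i^{d-1}$.

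Writing $\sigma = \Theta(k\sqrt{n/d})$ for the common order of magnitude of the chain lengths, the number of grids satisfies $s = O(k^n / \sigma^d)$ because $|G_i| \geq \sigma^d$ for every $i$. Summing the piecewise bounds,
\[
|\F| \;=\; \sum_{i=1}^s |\F \cap G_i| \;\leq\; s \cdot O(C\sigma^{d-1}) \;=\; O\!\left(\frac{Ck^n}{\sigma}\right) \;=\; O\!\left(\frac{Ck^{n-1}\sqrt{d}}{\sqrt{n}}\right) \;=\; O(C\sqrt{d}\,w)
\]
by Lemma \ref{estimate}. The weak case is handled identically, since weak copies of $P$ are preserved under the embedding as well.

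The main technical delicacy lies in controlling $M_i$: Corollary \ref{biggrid} only supplies a lower bound on the chain lengths $|C_{i,j}|$, whereas the clean $\sqrt{d}$ scaling in the final step requires an essentially matching upper bound $M_i = O(\sigma)$. I would arrange this by refining the chain decomposition of each $[k]^{m_j}$ so that every chain used has length within a constant factor of $\sigma$, splitting any chain that is too long into consecutive pieces of length $\Theta(\sigma)$ before taking the product. This refinement only changes hidden constants, and the remainder of the argument is a routine summation.
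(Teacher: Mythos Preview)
Your overall plan---partition via Corollary~\ref{biggrid}, bound each piece using the hypothesis, and sum---is exactly the paper's. The gap is in the step where you bound $|\F\cap G_i|$. Embedding the rectangular grid $G_i$ into the cube $[M_i]^d$ and applying the hypothesis gives $|\F\cap G_i|\le C M_i^{d-1}$, but your chain-splitting can only guarantee $M_i\le c\sigma$ for some absolute constant $c>1$: the chains supplied by Lemma~\ref{bigchain} do not all have the same length, and after cutting a chain of length $L$ into consecutive pieces of length $\approx\sigma$ the piece lengths will still vary by a constant factor (indeed $[k]^{m_j}$ generally admits no chain partition into chains of a single common length). Hence $M_i^{d-1}\le c^{\,d-1}\sigma^{d-1}$, and that $c^{\,d-1}$ is \emph{exponential} in $d$. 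Your displayed estimate hides this inside the $O(\cdot)$, but the theorem asserts $O(C\sqrt d\,w)$ with an absolute implied constant---and this is exactly what the applications require (in Theorem~\ref{mainthm} one takes $d=2|P|$, so a stray $2^{d}$ would destroy the polynomial-in-$|P|$ bound).

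The paper avoids this by going in the opposite direction: rather than enlarging $G_i$ to a cube, it shrinks. Let $m$ be the \emph{smallest} side of $G_i$; by averaging over translates there is an $m\times\cdots\times m$ subgrid $G_i'\subset G_i$ with $|G_i'\cap\F|\ge |\F\cap G_i|\cdot m^d/|G_i|$, and the hypothesis applied to $G_i'$ yields $|\F\cap G_i|\le C|G_i|/m$. Summing over $i$ gives $|\F|\le Ck^n/m$, and since $m=\Omega(k\sqrt{n/d})$ directly from Corollary~\ref{biggrid}, one obtains $|\F|=O(C\sqrt d\,w)$ with no splitting step and no exponential loss.
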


\begin{proof}
	Let $G_{1},\dots,G_{s}$ be a partition of $[k]^{n}$ into $d$-dimensional grids satisfying the properties of Corollary \ref{biggrid}. Then every side of $G_{i}$ is $\Omega(k\sqrt{n/d})$. Let $\F\subset [k]^{n}$ be a family not containing an induced (or weak) copy of $P$ and let $\F_{i}=\F\cap G_{i}$.
	Let $m$ be the smallest side of $G_{i}$, then by a simple averaging argument, $G_{i}$ contains an $m\times\dots\times m$ sized subgrid $G'_{i}$ such that $|G'_{i}\cap \F_{i}|\geq |\F_{i}|m^{d}/|G_{i}|$. But we have $|G'_{i}\cap \F_{i}|\leq Cm^{d-1}$ as $G'_{i}\cap \F_{i}$ is a subfamily of a grid isomorphic to $[m]^{d}$ and $G'_{i}\cap \F_{i}$ does not contain an induced (or weak) copy of $P$. Thus, we have 
	$$|\mathcal{F}_{i}|\leq \frac{|G'_{i}\cap \mathcal{F}_{i}||G_{i}|}{m^{d}}\leq \frac{C|G_{i}|}{m}=O\left(\frac{C|G_{i}|}{k\sqrt{n/d}}\right).$$ 
	But then  
	$$|\mathcal{F}|=\sum_{i=1}^{s}|\mathcal{F}_{i}|=O\left(\frac{C\sum_{i=1}^{s}|G_{i}|}{k\sqrt{n/d}}\right)=O\left(\frac{Ck^{n}}{k\sqrt{n/d}}\right)=O(C\sqrt{d}w),$$
	where the last equality holds by Lemma \ref{estimate}.
\end{proof}

\subsection{Applications}\label{sect:applications}

Corollary \ref{biggrid} is not only applicable in forbidden subposet problems. In this section, we present two immediate applications of this result, where we bound the size of certain families of $[k]^{n}$ avoiding a fixed substructure.

A subset $B$ of $2^{[n]}$ is a \emph{$d$-dimensional Boolean algebra}, if there exist disjoint sets $X_{0},X_{1},\dots,X_{d}\subset [n]$ such that 
$$B=\left\{X_{0}\cup \bigcup_{i\in I}X_{i}:I\subset [d]\right\}.$$
Let $b(n,d)$ denote the size of a maximal sized family in $2^{[n]}$ not containing a $d$-dimensional Boolean algebra. A $1$-dimensional Boolean algebra is a pair of comparable sets in $2^{[n]}$, so Sperner's theorem \cite{S} gives us that $b(n,1)=\binom{n}{\lfloor n/2\rfloor}=\Theta(2^{n}n^{-1/2})$. In case $d=2$, Erd\H{o}s and Kleitman \cite{EK} proved that $b(n,2)=\Theta(2^{n}n^{-1/4})$. In general, Gunderson, R\"{o}dl and Sidorenko \cite{GRS} showed that $b(n,d)=O_{d}(2^{n}n^{-1/2^{d}})$, where the constant hidden in $O_{d}(.)$ is super exponential in $d$. This was improved by Johnston, Lu and Milans \cite{JLM} to $b(n,d)=O(2^{n}n^{-1/2^{d}})$. Here, we present a short proof of the latter bound $b(n,d)=O(2^{n}n^{-1/2^{d}})$, and show a similar bound for a natural generalization of the problem in grids.

Let us extend the definition of Boolean algebra in grids. Say that two vectors $\mathbf{v},\mathbf{w}\in \{0,1,\dots,k-1\}^{d}$ are \emph{disjoint}, if at most one of $\mathbf{v}(i)$ and $\mathbf{w}(i)$ is nonzero for $i\in [d]$. Then $B\subset [k]^{n}$ is a \emph{Boolean algebra}, if there exist $d+1$ vectors $\mathbf{v}_{0}\in [k]^{n}$ and $\mathbf{v}_{1},\dots,\mathbf{v}_{d}\in \{0,1,\dots,k-1\}^{n}$ such that $\mathbf{v}_{1},\dots,\mathbf{v}_{d}$ are pairwise disjoint and 
$$B=\left\{\mathbf{v}_{0}+\sum_{i\in I}\mathbf{v}_{i}:I\subset [d]\right\}.$$
Note that a Boolean algebra in $[2]^{n}$ corresponds to a Boolean algebra in $2^{[n]}$ in the natural way.

\begin{theorem}
	Let $k,n,d$ be positive integers such that $k\geq 2$ and $n\geq d$. If $\F\subset [k]^{n}$ does not contain a $d$-dimensional Boolean algebra, then 
	$$|\F|=O(k^{n-1/2^{d-1}}n^{-1/2^{d}}).$$
	In particular, 
	$$b(n,d)=O(2^{n}n^{-1/2^{d}}).$$
\end{theorem}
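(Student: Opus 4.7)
The plan mirrors the proof of Theorem \ref{gridtogrid}: partition $[k]^{n}$ into $d$-dimensional grids of large sides via Corollary \ref{biggrid}, prove a Zarankiewicz-type bound inside each piece, and sum. The first step is to reduce the problem to finding \emph{combinatorial boxes} inside $d$-dimensional grids. Suppose $G=C_{1}\times\cdots\times C_{d}\subset[k]^{n}$ is a $d$-dimensional grid, and locate elements $a_{j}<b_{j}$ in $C_{j}$ for every $j\in[d]$ such that the whole box $\{a_{1},b_{1}\}\times\cdots\times\{a_{d},b_{d}\}$ lies in $\mathcal{F}$. Then this box is a $d$-dimensional Boolean algebra in $[k]^{n}$: its generators are the difference vectors $\mathbf{v}_{j}=(0,\dots,0,b_{j}-a_{j},0,\dots,0)\in\{0,\dots,k-1\}^{n}$ supported on the $d$ disjoint coordinate blocks underlying $G$ (note $b_{j}-a_{j}$ is coordinate-wise nonnegative since $a_{j}\leq b_{j}$ in the chain $C_{j}$). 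So it suffices to bound, inside each piece of the partition, the size of a family containing no combinatorial box.

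The main ingredient is then a Zarankiewicz-type bound: if $\mathcal{G}\subset[m]^{d}$ contains no combinatorial box, then $|\mathcal{G}|\leq C_{d}\,m^{d-1/2^{d-1}}$. I would prove this by induction on $d$; the case $d=1$ is immediate. For the inductive step, write $\mathcal{G}_{c}=\{x\in[m]^{d-1}:(x,c)\in\mathcal{G}\}$ and observe that for every pair $c_{1}<c_{2}$ in $[m]$ the intersection $\mathcal{G}_{c_{1}}\cap\mathcal{G}_{c_{2}}\subset[m]^{d-1}$ is box-free (otherwise a $(d-1)$-dimensional box there together with $\{c_{1},c_{2}\}$ would yield a $d$-box in $\mathcal{G}$), so by induction it has size at most $C_{d-1}\,m^{d-1-1/2^{d-2}}$. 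Double counting
$$\sum_{c_{1}<c_{2}}|\mathcal{G}_{c_{1}}\cap\mathcal{G}_{c_{2}}|=\sum_{x\in[m]^{d-1}}\binom{|\{c:x\in\mathcal{G}_{c}\}|}{2}$$
and using convexity of $\binom{\cdot}{2}$ together with $\sum_{x}|\{c:x\in\mathcal{G}_{c}\}|=|\mathcal{G}|$ gives $|\mathcal{G}|^{2}=O_{d}(m^{2d-1/2^{d-2}})$; taking square roots yields the bound since $\tfrac{1}{2}\cdot\tfrac{1}{2^{d-2}}=\tfrac{1}{2^{d-1}}$.

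With these two ingredients the proof proceeds as in Theorem \ref{gridtogrid}. Apply Corollary \ref{biggrid} to partition $[k]^{n}$ into $d$-dimensional grids $G_{1},\dots,G_{s}$ with all sides of size $m=\Omega(k\sqrt{n/d})$. Setting $\mathcal{F}_{i}=\mathcal{F}\cap G_{i}$ and averaging, as in Theorem \ref{gridtogrid}, to a balanced sub-grid $G_{i}'\cong[m]^{d}$ with $|\mathcal{F}_{i}\cap G_{i}'|\geq|\mathcal{F}_{i}|\,m^{d}/|G_{i}|$, the Zarankiewicz bound applied to $\mathcal{F}_{i}\cap G_{i}'$ yields $|\mathcal{F}_{i}|\leq C_{d}|G_{i}|/m^{1/2^{d-1}}$. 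Summing over $i$ and using $\sum_{i}|G_{i}|=k^{n}$,
$$|\mathcal{F}|\leq\frac{C_{d}\,k^{n}}{m^{1/2^{d-1}}}=O_{d}\!\left(\frac{k^{n}}{(k\sqrt{n/d})^{1/2^{d-1}}}\right)=O\!\left(k^{n-1/2^{d-1}}n^{-1/2^{d}}\right),$$
and the special case $k=2$ gives $b(n,d)=O(2^{n}n^{-1/2^{d}})$. The only genuinely new ingredient is the Zarankiewicz bound in the second paragraph; the remainder is a direct application of the partitioning machinery of Section \ref{sect:partition}.
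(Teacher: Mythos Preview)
Your proof is correct and follows essentially the same route as the paper: reduce via Corollary~\ref{biggrid} to $d$-dimensional grids, observe that a combinatorial box there is a $d$-dimensional Boolean algebra in $[k]^{n}$, bound box-free families in $[m]^{d}$ by $O(m^{d-1/2^{d-1}})$, and sum. The only difference is that the paper cites Erd\H{o}s's hypergraph Tur\'an bound for $K_{2,\dots,2}^{(d)}$ directly, whereas you reprove it by the standard K\H{o}v\'ari--S\'os--Tur\'an style induction; your recursion $C_{d}\le\sqrt{2C_{d-1}}$ in fact gives $C_{d}<2$, so the implicit constant is absolute, matching the $O(\cdot)$ (rather than $O_{d}(\cdot)$) in the statement.
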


\begin{proof}
	First, let us consider the case $n=d$. In this case, $B\subset [k]^{d}$ is a $d$-dimensional Boolean algebra if and only if there exist $\mathbf{v}_{0},\mathbf{v}_{1}\in [k]^{d}$ such that $\mathbf{v}_{0}(i)<\mathbf{v}_{1}(i)$ for $i\in [d]$ and 
	$$B=\{(\mathbf{v}_{i_{1}}(1),\dots,\mathbf{v}_{i_{d}}(d)):(i_{1},\dots,i_{d})\in\{0,1\}^{d}\}.$$
	But then the problem under consideration is equivalent to an extremal hypergraph Tur\'{a}n problem. Consider the $d$-uniform $d$-partite hypergraph $\mathcal{H}$ with each vertex class being a copy of $[k]$, where $x_{1}\dots x_{d}$ forms a hyperedge if $(x_{1},\dots,x_{d})\in \F$. Then $\F$ does not  contain a $d$-dimensional Boolean algebra if and only if $\mathcal{H}$ does not contain the complete $d$-partite $d$-uniform hypergraph $K_{2,\dots,2}$. But then by a result of Erd\H{o}s \cite{E2},  $\mathcal{H}$ has at most $k^{d-1/2^{d-1}}$ hyperedges, so $|\F|\leq k^{d-1/2^{d-1}}$.

	Now suppose that $n>d$ and let $\F\subset [k]^{n}$ be a family not containing a $d$-dimensional Boolean algebra. From now on, we shall follow a similar line of proof as in Theorem \ref{gridtogrid}. Let $m_{1},\dots,m_{d}$ be positive integers and $G_{1},\dots,G_{s}$ be a partition of $[k]^{n}$ into $d$-dimensional grids defined in the same way as in Corollary \ref{biggrid}. That is, $G_{i}=C_{i,1}\times\dots\times C_{i,d}$ for $i\in [s]$, where $C_{i,j}\in [k]^{m_{j}}$ is a chain of size $\Omega(k\sqrt{n/d})$ for $j\in [d]$.
	
	Again, let $m$ be the smallest side of $G_{i}$, then by a simple averaging argument, $G_{i}$ contains a subgrid $G'_{i}=C'_{i,1}\times\dots C'_{i,d}$ such that $C'_{i,j}\subset C_{i,j}$, $|C'_{i,j}|=m$ and $|G'_{i}\cap \F|\geq |\F_{i}|m^{d}/|G_{i}|$. Consider the natural bijection $\pi: G'_{i}\rightarrow [m]^{d}$. Then it is easy to see that $\pi(\F\cap G'_{i})\subset [m]^{d}$ also does not contain a $d$-dimensional Boolean algebra, so we have $|G'_{i}\cap \F|\leq m^{d-1/2^{d-1}}$. But then 
	$$|\mathcal{G}_{i}\cap \F|\leq \frac{|G'_{i}\cap \mathcal{F}_{i}||G_{i}|}{m^{d}}\leq |G_{i}|m^{-1/2^{d-1}}=O(|G_{i}|(k\sqrt{n/d})^{-1/2^{d-1}}),$$
	which yields
	$$|\mathcal{F}|=\sum_{i=1}^{s}|G_{i}\cap\F|\leq O(\sum_{i=1}^{s}|G_{i}|(k\sqrt{n/d})^{-1/2^{d-1}})=O(k^{n-1/2^{d-1}}n^{-1/2^{d}}),$$ 
	where the last equality holds noting that $1/2<d^{-1/2^{d}}\leq 1$. 
\end{proof}

Let us show another quick application of Corollary \ref{biggrid}. Kleitman \cite{K2} proved that if a family  $\F\subset 2^{[n]}$ does not contain three distinct sets $A,B,C$ such that $A=B\cup C$, then $|\F|\leq \binom{n}{\lceil n/2\rceil}+2^{n}/n$. We shall prove a generalization of a somewhat weaker bound in grids. For $\mathbf{v},\mathbf{w}\in [k]^{n}$, let $$\mathbf{v}\vee \mathbf{w}=(\max\{\mathbf{v}(1),\mathbf{w}(1)\},\dots,\max\{\mathbf{v}(n),\mathbf{w}(n)\}).$$ 

\begin{theorem}
	Let $k,n$ be positive integers larger than $1$. Suppose that the family $\F\subset [k]^{n}$ does not contain three distinct elements $\mathbf{u},\mathbf{v},\mathbf{w}$ such that $\mathbf{u}=\mathbf{v}\vee \mathbf{w}$. Then $|\F|=O(w)$, where $w$ is the width of $[k]^{n}$.
\end{theorem}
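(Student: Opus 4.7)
The plan is to apply Corollary~\ref{biggrid} with $d=2$, in the same spirit as the previous two applications in this section. Concretely, I partition $[k]^{n}$ into $2$-dimensional grids $G_{1},\dots,G_{s}$, where each $G_{i}=C_{i,1}\times C_{i,2}$ and both $C_{i,1},C_{i,2}$ are chains of size $\Omega(k\sqrt{n/2})$ in $[k]^{m_{1}},[k]^{m_{2}}$ respectively, with $m_{1}+m_{2}=n$ and $m_{j}\in\{\lfloor n/2\rfloor,\lceil n/2\rceil\}$. Since $G_{i}$ is a product of two chains of $[k]^{n}$, it is closed under the join operation $\vee$ inherited from $[k]^{n}$; hence $\F_{i}=\F\cap G_{i}$ itself contains no three distinct elements $\mathbf{u}=\mathbf{v}\vee\mathbf{w}$, and it suffices to bound $|\F_{i}|$ well for each $i$.

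The main new ingredient is the following 2-dimensional lemma: if $\F'\subset[m_{1}]\times[m_{2}]$ contains no three distinct elements $\mathbf{u}=\mathbf{v}\vee\mathbf{w}$, then $|\F'|\leq m_{1}+m_{2}$. The observation is that if $\mathbf{v}$ and $\mathbf{w}$ are comparable, then $\mathbf{v}\vee\mathbf{w}\in\{\mathbf{v},\mathbf{w}\}$, so the pair must be incomparable; and in two dimensions, the only way two incomparable points can join to a third is the axis-aligned ``corner'' $\{(x,y'),(x',y),(x,y)\}$ with $x'<x$ and $y'<y$. So the hypothesis is precisely that no point $(x,y)\in\F'$ has both a strictly lower point in its column and a strictly leftward point in its row, i.e.\ every $(x,y)\in\F'$ is either the minimum of its column or the minimum of its row. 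Since there are at most $m_{1}$ column-minima and $m_{2}$ row-minima, this gives $|\F'|\leq m_{1}+m_{2}$.

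Applying the lemma to each $\F_{i}$ via the natural bijection yields $|\F_{i}|\leq|C_{i,1}|+|C_{i,2}|$. Summing over $i$, the sum $\sum_{i}|C_{i,j}|$ is exactly $k^{m_{j}}$ multiplied by the number of chains in the chain partition of $[k]^{m_{3-j}}$; by Lemma~\ref{estimate} this product is $\Theta(k^{n-1}/\sqrt{n})=\Theta(w)$ for each $j\in\{1,2\}$. Thus $|\F|=O(w)$, as desired. The only genuinely non-routine step is the 2-dimensional lemma; once the forbidden triple is recognised as an axis-aligned corner, the column-minimum/row-minimum dichotomy gives the bound immediately, and the rest is the partitioning machinery already developed in Section~\ref{sect:partition}.
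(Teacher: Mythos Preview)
Your proof is correct and follows essentially the same approach as the paper: the same two-dimensional lemma (your column-minimum/row-minimum dichotomy is exactly the paper's $x$-bad/$y$-bad argument), the same application of Corollary~\ref{biggrid} with $d=2$, and the same observation that the natural bijection preserves the forbidden configuration because a product of two chains is closed under $\vee$. The only difference is in the final summation: the paper bounds $|C_{i,1}|+|C_{i,2}|\leq 2|G_i|/\min\{|C_{i,1}|,|C_{i,2}|\}$ and then uses the lower bound on chain lengths, whereas you sum $\sum_i |C_{i,j}|$ directly using the product structure of the partition and the fact that the number of chains equals the width of $[k]^{m_{3-j}}$; both computations arrive at $\Theta(w)$ and the distinction is purely cosmetic.
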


\begin{proof}
	First, suppose that $\F\subset [k]\times [l]$ is a family not containing three distinct elements $\mathbf{u},\mathbf{v},\mathbf{w}$ such that $\mathbf{u}=\mathbf{v}\vee \mathbf{w}$. In this case, we show that $|\F|\leq k+l$. To this end, suppose that $|\F|\geq k+l+1$. Say that an element $(a,b)\in \F$ is \emph{$x$-bad}, if there is no $(a',b)\in \F$ with $a'<a$, and say that $(a,b)$ is \emph{$y$-bad}, if there is no $(a,b')\in \F$ such that $b'<b$. Every row of $[k]\times [l]$ contains at most one $x$-bad element, and every column has at most one $y$-bad element, so $\F$ has at most $k+l$ elements that are either $x$-bad or $y$-bad. Hence, $\F$ has an element $\mathbf{u}=(a,b)$ for which there exist $\mathbf{v}=(a',b),\mathbf{w}=(a,b')\in \F$ such that $a'<a$ and $b'<b$. But then $\mathbf{u}=\mathbf{v}\vee \mathbf{w}$, contradiction.
	
	Now let $n\geq 2$ and let $\F\subset [k]^{n}$ such that $\F$ does not contain three distinct elements $\mathbf{u},\mathbf{v},\mathbf{w}$ such that $\mathbf{u}=\mathbf{v}\vee \mathbf{w}$. Apply Corollary \ref{biggrid} to find a partition of $[k]^{n}$ into the $2$-dimensional grids $G_{1},\dots,G_{s}$ such that $G_{i}=C_{i}\times D_{i}$, where $C_{i}\in [k]^{\lfloor n/2\rfloor}$ and $D_{i}\in [k]^{\lceil n/2\rceil}$ are chains of size $\Omega(k\sqrt{n})$. Note that if $\pi: G_{i}\rightarrow [|C_{i}|]\times[|D_{i}|]$ is the natural bijection, then $\pi(\F\cap G_{i})$ also does not contain three distinct elements $\mathbf{u},\mathbf{v},\mathbf{w}$ such that $\mathbf{u}=\mathbf{v}\vee \mathbf{w}$. Hence, we have 
	$$|G_{i}\cap \F|\leq |C_{i}|+|D_{i}|\leq \frac{2|G_{i}|}{\min\{|C_{i}|,|D_{i}|\}}=O\left(\frac{|G_{i}|}{k\sqrt{n}}\right).$$
	But then
	$$|\F|=\sum_{i=1}^{s}|\F\cap G_{i}|\leq O\left(\frac{k^{n}}{k\sqrt{n}}\right)=O(w),$$
	where the last equality holds by Lemma \ref{estimate}.
\end{proof}

\section{Bounds in fixed dimension}\label{sect:mainthm}

The aim of this section is to prove the following theorem.

\begin{theorem}\label{mainthm2}
	There exists a constant $c(h)$ depending only on $h$ such that the following holds. Let $k$ be a positive integer, $d=2|P|$. If the family $\F\subset [k]^{d}$ does not contain an induced copy of $P$, then 
	$$|\F|\leq |P|^{c(h)}k^{d-1}.$$
\end{theorem}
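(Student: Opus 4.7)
I would prove Theorem \ref{mainthm2} by induction on the height $h$, aiming for $c(h) = 2^{O(h \log h)}$ as the introduction indicates. For the base case $h = 1$, $P$ is an antichain of size $|P|$, so any (induced $P$)-free family $\F \subset [k]^{d}$ has width at most $|P|-1$. By Dilworth's theorem it decomposes into at most $|P|-1$ chains, each of length at most $d(k-1)+1$ in $[k]^{d}$, yielding $|\F| = O(|P|^{2}k)$ for $d = 2|P|$. This sits well inside $|P|^{c(1)} k^{d-1}$ for sufficiently large $c(1)$, with the degenerate cases $|P| \leq 1$ and $k = 1$ handled directly.

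For the inductive step, assume the bound with constant $c(h-1)$ for posets of height $h-1$. Given $P$ of height $h$, let $A$ be its set of maximal elements and set $P' = P \setminus A$, so $P'$ has height $h-1$ and $|P'| \leq |P|$. My plan is to split the $d = 2|P|$ coordinates into two blocks of roughly $|P|$ coordinates each, writing $[k]^{d} \cong [k]^{d_1} \times [k]^{d_2}$, with the first factor used to house induced copies of $P'$ (exploiting that $P'$ has dimension at most $|P'| \leq d_1$) and the second factor used to carry the top antichain $A$. After averaging over fibers in the first factor, either (i) a typical fiber is small enough that $|\F|$ is already controlled by the inductive bound (perhaps after re-applying Theorem \ref{gridtogrid} to the fibers), or (ii) a positive-density set of fibers each contains many points of $\F$, and one combines the induction hypothesis on $P'$ inside the first factor with a "profile" classification on the second factor. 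The polynomial-in-$|P|$ dependence of $c(h)$ would then come from a type-counting step: each candidate top element is labeled by its comparability pattern relative to the top level of an induced copy of $P'$, and because $P$ has height $h$ the number of realizable profiles is only $|P|^{O(h)}$, rather than $2^{|P|}$; pigeonholing over profiles yields a large subfamily on which an induced copy of $P$ can be assembled, contradicting the (induced $P$)-free assumption.

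The main obstacle I expect is the extension step: converting many induced copies of $P'$ together with many candidate top antichains into a single \emph{induced} copy of $P$, rather than merely a weak copy. A naive greedy extension guarantees only the required comparabilities and will in general force extra comparabilities that destroy the induced structure. The reason we can afford only polynomially many extension profiles — and hence reach the claimed $c(h) = 2^{O(h \log h)}$ — is that the bounded height $h$ severely limits how a new top element can interact with the second-from-top level of $P$; making this quantitative without picking up a factor exponential in $|P|$, presumably via a bipartite forbidden-configuration argument, is the technical core of the proof.
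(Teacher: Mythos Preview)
Your proposal takes a different route from the paper and has a genuine gap at exactly the point you identify. The assertion that ``the number of realizable profiles is only $|P|^{O(h)}$'' carries the entire argument and is not justified: given a fixed induced copy of $P'$ in the first block of coordinates, an element of $\F$ lying above it can in principle have any of the $2^{|A_{h-1}|}$ comparability patterns with respect to the top level of $P'$, and bounded height places no restriction on which patterns occur \emph{in $\F$}. What is bounded by $|P|$ is the number of distinct profiles among the elements of $A$ \emph{inside $P$}; but pigeonholing requires control over the profiles realized by points of $\F$, not by points of $P$. Without that, your scheme recovers only an exponential-in-$|P|$ constant, which is precisely what the theorem is meant to beat. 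You also need the selected top elements to be pairwise incomparable and to have the correct (non-)comparabilities with \emph{all} lower levels of $P'$, not just the top one; these further constraints are not addressed.

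The paper sidesteps the profile explosion by never removing an entire level. It interpolates between the complete multilevel poset $P_{0}\cong K_{|A_{1}|,\dots,|A_{h}|}$ and $P$ through a chain $P_{0},P_{1},\dots,P_{q}=P$, where each step corrects the comparabilities involving a \emph{single} vertex and costs one extra coordinate. The base $P_{0}$ is handled directly (Lemma~\ref{strongmultipartite}), and the passage from $P_{l-1}$ to $P_{l}$ is carried out by a Marcus--Tardos style multiscale block argument (blocks classified as light, fat, or medium at each of $h$ nested scales $s_{1}\ll\dots\ll s_{h-1}$), combined with a density lemma (Lemma~\ref{densitylemma}) that gives only the weaker bound $O_{|P|}(k^{d-1/h})$ but is strong enough inside blocks of side polynomial in $|P|$. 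The single-vertex correction is what keeps the constant polynomial: one new coordinate, together with ``good'' elements that have neighbours at every scale along that coordinate, suffices to reroute the comparabilities around one vertex. Your height induction forces you to correct all of $A$ at once, which is why the extension step resists a polynomial bound.
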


Let us sketch the proof of Theorem \ref{mainthm2} while introducing the notation used in this section. 

\begin{definition}
The \emph{complete multivel poset}, $K_{r_{1},\dots,r_{h}}$, is defined as follows. Let $A_{1},\dots,A_{h}$ be disjoint sets such that $|A_{i}|=r_{i}$ for $i=1,\dots,h$. Let $K=K_{r_{1},\dots,r_{h}}$ be the poset on the set $A_{1}\cup\dots\cup A_{h}$, where $x<_{K}y$ holds if and only if $x\in A_{i}$ and $y\in A_{j}$ for some $1\leq i<j\leq h$. If $r_{1}=\dots=r_{h}=r$, write $K^{h}_{r}$ instead of $K_{r_{1},\dots,r_{h}}$. 	
\end{definition}

Fix a poset $P$ of height $h$ and let $p=|P|$. Then $P$ can be partitioned into $h$ antichains $A_{1},\dots,A_{h}$ such that every element of $A_{i}$ is smaller than or incomparable to any element of $A_{j}$ for $1\leq i<j\leq h$. Take $A_{i}$ to be the set of elements $x\in P$ for which the size of the largest chain starting from $x$ is $i$, for example. We refer to the sets $A_{1},\dots,A_{h}$ as the \emph{levels} of $P$. Let the \emph{rank} of $x\in P$, denoted by $\rk(x)$, be $i$ if $x\in A_{i}$.

Our idea to find an induced copy of $P$ in a family $\F\subset [k]^{d}$ is to first find an induced copy of $K_{|A_{1}|,\dots,|A_{h}|}$ in some lower dimensional slice of $\F$, which we shall correct vertex by vertex to resemble more and more to $P$ by using one new dimension each time. 

To this end, let $q=p-|A_{h}|$ and define the sequence of posets $P_{0},\dots,P_{q}$ as follows. First of all, let $z_{1},\dots,z_{p}$ be an enumeration of the elements of $P$, which satisfies $\rk(z_{1})\leq\dots\leq \rk(z_{p})$. Let $P_{0}$ be the poset on the elements of $P$ such that $x<_{P_{0}}y$ if $\rk(x)<\rk(y)$. Then $P_{0}$ is isomorphic to $K_{|A_{1}|,\dots,|A_{h}|}$. If $P_{l}$ is already defined, where $l<p$, define $P_{l+1}$ as follows. For a poset $Q$, let 
$$\mathcal{R}(Q)=\{(x,y)\in Q^{2}: x<_{Q}y\}$$ be the set of comparable pairs of $Q$. We shall define $P_{l+1}$ by setting $\mathcal{R}(P_{l+1})$. Let
$$S=\{x\in P: z_{l+1}\leq_{P} x\},$$ then set $$\mathcal{R}(P_{l+1})=\mathcal{R}(P_{l})\setminus (S\times(P\setminus S)).$$ 
In other words, we change $P_{l}$ to resemble more to $P$ by correcting the comparabilities involving $z_{l+1}$, and some other comparabilities necessary to keep the transitivity property of $\leq_{P_{l}}$, the resulting poset being $P_{l+1}$.

It is clear that $P_{l+1}$ is a subposet of $P_{l}$, and it can be easily proved by inducton on $l$ that if $1\leq i<j\leq p$ and $i\leq l$, then $z_{i}<_{P_{l}} z_{j}$ if and only if $z_{i}<_{P} z_{j}$. Hence, $P_{q}=P$, as the elements of $A_{h}$ are already forming an antichain in $P_{0}$.

Section \ref{sect:partite} is devoted to finding bounds on the size of (induced $P_{0}$)-free families. Then, in Section \ref{sect:dense}, we prepare the proof of Theorem \ref{mainthm2}, which is presented in Section \ref{sect:sparse}.

\subsection{Chains and complete $h$-partite posets}\label{sect:partite}

\begin{definition}
	Let $Q$ be a poset and let $k,n$ be positive integers. $Q'\subset [k]^{n}$ is a \emph{strong copy} of $Q$ if $Q'$ is an induced copy of $P$, and whenever $\mathbf{x},\mathbf{y}\in Q'$ satisfy $\mathbf{x}\prec\mathbf{y}$, we have strict inequality in every coordinate, that is, $\mathbf{x}(i)<\mathbf{y}(i)$ for $i\in [n]$. Note that we allow incomparable pairs of $Q'$ to have some of their coordinates equal.
\end{definition}

The notion of strong copy is useful for the following reason. Let $\F\subset [k]^{n}$ be a family not containing a strong copy of $P$. Often, when trying to bound the size of $\F$, we shall use induction on $k$ in the following fashion. With some suitable positive integer $s$, we divide $[k]^{n}$ into $s\times\dots\times s$ sized blocks $B_{\mathbf{x}}$ indexed by $\mathbf{x}\in [k/s]^{n}$. If $\F_{1}\subset [k/s]^{n}$ is the family of indices $\mathbf{x}$ such that $\F\cap B_{\mathbf{x}}$ is non-empty, then $\F_{1}$ does not contain a strong copy of $P$ either, so we can use our induction hypothesis to bound the size of $\F_{1}$.

 Note that this is not necessarily true if the notion of strong copy is replaced with induced copy: if $\F=\{(1,2),(3,1)\}\subset [4]^{2}$, then $\F$ does not contain a chain of size $2$; however, if $s=2$, then $\F_{1}=\{(1,1),(2,1)\}\subset [2]^{2}$, which does contain a chain of size $2$. 
 
 Now, our goal is to prove a bound on the size of the maximal family in $[k]^{d}$ not containing a strong copy of $P_{0}\cong K_{|A_{1}|,...,|A_{h}|}$. To this end, we first prove a bound on the size of families not containing a strong copy of a chain of fixed size.

\begin{lemma}\label{strongchain}
	Let $k,d,h$ be positive integers and let $C_{h}$ be a chain of size $h$. If $\F\subset [k]^{d}$ does not contain a strong copy of $C_{h}$, we have 
	$$|\F|\leq d(h-1)k^{d-1}.$$
\end{lemma}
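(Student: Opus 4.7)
The plan is to run a Mirsky-style rank argument on $\mathcal{F}$ with respect to the strong order, and then reduce the problem to bounding the size of a single \emph{strong antichain}. Concretely, for each $\mathbf{x}\in \mathcal{F}$ I would define $r(\mathbf{x})$ to be the length of the longest strong chain in $\mathcal{F}$ ending at $\mathbf{x}$; the assumption that $\mathcal{F}$ is strong-$C_h$-free forces $r(\mathbf{x})\in\{1,\dots,h-1\}$. If $\mathbf{x}$ is strictly smaller than $\mathbf{y}$ in every coordinate, then appending $\mathbf{y}$ to a longest strong chain ending at $\mathbf{x}$ yields $r(\mathbf{y})\geq r(\mathbf{x})+1$, so each level set $\mathcal{F}_j=r^{-1}(j)$ is a strong antichain, i.e.\ a subset of $[k]^d$ in which no two elements are strictly ordered in every coordinate. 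It therefore suffices to show that every strong antichain $\mathcal{A}\subset [k]^d$ has $|\mathcal{A}|\leq dk^{d-1}$.

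For that antichain bound I would exhibit an explicit partition of $[k]^d$ into few strong chains. Let $\mathbf{1}$ denote the all-ones vector. For every $\mathbf{x}\in [k]^d$, set $t(\mathbf{x})=\min_i \mathbf{x}(i)-1$ and associate to $\mathbf{x}$ the starting point $\mathbf{c}(\mathbf{x})=\mathbf{x}-t(\mathbf{x})\mathbf{1}$, which always lies in the ``lower boundary'' $B=\{\mathbf{c}\in [k]^d:\min_i c_i=1\}$. Every fiber of this map is the set $\{\mathbf{c}+t\mathbf{1}:0\leq t\leq k-\max_i c_i\}$, which is a strong chain because incrementing every coordinate by $1$ preserves strict inequality in every coordinate. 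Hence $[k]^d$ decomposes into exactly $|B|=k^d-(k-1)^d$ strong chains, and the trivial pigeonhole observation (a strong antichain meets each strong chain in at most one element) gives $|\mathcal{A}|\leq k^d-(k-1)^d$. Finally, the mean value theorem applied to $f(x)=x^d$ yields $k^d-(k-1)^d\leq dk^{d-1}$.

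Combining the two pieces, $|\mathcal{F}|=\sum_{j=1}^{h-1}|\mathcal{F}_j|\leq (h-1)\cdot dk^{d-1}$, which is the asserted bound. I do not anticipate any serious obstacle: the only step needing verification is that $\mathbf{x}\mapsto \mathbf{c}(\mathbf{x})$ really yields a partition of $[k]^d$ into strong chains, and this is immediate once one observes that translation by $\mathbf{1}$ is the unique move that advances a strong chain by one ``diagonal level'' while keeping it inside $[k]^d$.
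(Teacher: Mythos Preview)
Your argument is correct and in fact yields the slightly sharper bound $(h-1)\bigl(k^{d}-(k-1)^{d}\bigr)$ before you weaken it to $d(h-1)k^{d-1}$. The route, however, is genuinely different from the paper's. The paper proceeds by induction on $h$: starting from $\mathcal{F}_{d}=\mathcal{F}$, it iterates over the coordinate directions $i=d,d-1,\dots,1$, at each step deleting the elements that are maximal in the $i$-th coordinate within their axis-parallel line (at most $k^{d-1}$ of them), and then shows that any surviving element can be strictly dominated in all coordinates by some element of $\mathcal{F}$, so the residual family $\mathcal{F}_{0}$ is strong-$C_{h-1}$-free. Your approach replaces this peeling-by-coordinates with a one-shot Mirsky decomposition with respect to the strong order and an explicit diagonal chain cover of $[k]^{d}$; this is more conceptual (it identifies $k^{d}-(k-1)^{d}$ as the exact width of the strong order on $[k]^{d}$) and avoids the nested induction, while the paper's argument is perhaps more in the spirit of the later proofs in the section, where the ``remove extremal elements in one direction, reduce $h$'' idea recurs.
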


\begin{proof}
	We proceed by induction on $h$. In case $h=1$, the statement is trivial.
	
	Suppose that $h>1$ and let $\F\subset [k]^{d}$ such that $\F$ does not contain a strong copy of $C_{h}$. Let $\F_{d}=\F$ and define the families $\F_{d-1},\dots,\F_{0}$ as follows. Suppose that $\F_{i}$ is already defined. Say that an element $\mathbf{x}\in \F_{i}$ is \emph{bad}, if there is no $\mathbf{y}\in \F_{i}$ such that $\mathbf{x}(j)=\mathbf{y}(j)$ for $[j]\in [d]\setminus\{i\}$ and $\mathbf{x}(i)<\mathbf{y}(i)$. Let  $$\mathcal{F}_{i-1}=\mathcal{F}_{i}\setminus\{\mathbf{x}\in \mathcal{F}_{i}:\mathbf{x}\mbox{ is bad}\}.$$ Each row of $[k]^{d}$ in the direction of the $i$-th coordinate can contain at most one bad element, so there are at most $k^{d-1}$ bad elements in $\F_{i}$. Hence, we have $|\F_{i-1}|\geq |\F_{i}|-k^{d-1}$, which implies $|\F_{0}|\geq |\F|-dk^{d-1}$. 
	
	Also, note that $\F_{0}$ does not contain a strong copy of $C_{h-1}$. To this end, suppose that $C\subset\F_{0}$ is a strong copy of $C_{h-1}$ with maximal element $\mathbf{x}_{0}$. Then, there exist $\mathbf{x}_{1},\dots,\mathbf{x}_{d}$ such that for $i\in [d]$, $\mathbf{x}_{i}\in \F_{i}$, and $\mathbf{x}_{i-1}(j)=\mathbf{x}_{i}(j)$ if $j\in [d]\setminus \{i\}$ and $\mathbf{x}_{i-1}(i)<\mathbf{x}_{i}(i)$. But then $\mathbf{x}_{0}(j)<\mathbf{x}_{d}(j)$ for $j\in [d]$, so $C\cup\{\mathbf{x}_{d}\}$ is a strong copy of $C_{h}$ in $\F$, contradiction.
	
	By induction, we have $|\F_{0}|\leq d(h-2)k^{d-1}$, so $|\F|\leq d(h-1)k^{d-1}$.
\end{proof}

Now we are ready to bound the size of families not containing a strong copy of a complete multilevel poset.

\begin{lemma}\label{strongmultipartite}
	Let $k,d,h,r$ be positive integers such that $2^{d-2}/(d+1)\geq r$. If $\F\subset [k]^{d}$ does not contain a strong copy of $K^{h}_{r}$, then $$|\F|\leq 4d(h-1)k^{d-1}.$$
\end{lemma}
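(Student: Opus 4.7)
The plan is to derive Lemma~\ref{strongmultipartite} from Lemma~\ref{strongchain} by a dimension branching/averaging argument that exploits the hypothesis $2^{d-2}/(d+1) \geq r$. The bound $4d(h-1)k^{d-1}$ is only a constant factor above the chain bound $d(h-1)k^{d-1}$, so the real question is how to upgrade a single strong chain $C_h$ (a strong copy of $K^h_1$, guaranteed by Lemma~\ref{strongchain}) into $r$ ``parallel'' strong chains that assemble into a strong copy of $K^h_r$. Intuitively, since $|P| = hr$ in this setting, the gap between the chain bound and the multilevel bound should be small once the dimension is large enough to accommodate $r$ mutually incomparable witnesses at each level, and the hypothesis is exactly what quantifies this ``large enough.''

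First I would apply Lemma~\ref{strongchain} to obtain, from any $\F$ with $|\F| > 4d(h-1)k^{d-1}$, a strong chain $\mathbf{z}_1 \prec \cdots \prec \mathbf{z}_h$ in $\F$. Next, I would iteratively halve one coordinate at a time, keeping whichever half of $[k]$ contains the majority of the remaining family. Each halving loses a factor of $2$ in the size of $\F$; after $d-2$ such halvings, a sub-box survives containing at least $|\F|/2^{d-2}$ elements, which by the dimension condition is still large enough to reapply Lemma~\ref{strongchain} inside the sub-box and extract a strong chain there.

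The key observation is that there are $2^{d-2}$ possible sign sequences for these halving choices, producing $2^{d-2}$ disjoint sub-boxes; carrying out the chain extraction in each yields $2^{d-2}$ disjoint strong chains of size $h$. Since $2^{d-2} \geq (d+1) r$, a pigeonhole can be used to select $r$ of these chains whose sub-boxes are arranged so that the $i$-th elements across the chains form an antichain in $[k]^d$ and the $i$-th element of one chain is strictly coordinate-wise below the $j$-th element of any other chain for $i<j$. Assembling the $i$-th elements of the selected chains into the level $L_i$ of $K^h_r$ then produces the forbidden strong copy and yields the desired contradiction.

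\textbf{Main obstacle.} The delicate part is coordinating the $r$ chains so that the level-$i$ elements across chains form an antichain in $[k]^d$ (pairwise incomparable) \emph{while} the level-$i$ element of one chain is strictly coordinate-wise dominated by the level-$j$ element ($i<j$) of every other chain. Binary branching automatically separates different sub-boxes in at least one coordinate, but the \emph{sign} of the separation must be orchestrated carefully so that incomparability holds within a level and strict dominance holds across levels in every coordinate simultaneously. Tracking these sign patterns is what absorbs the $(d+1)$ factor in the denominator of the hypothesis, while the constant $4$ in the final bound looks to come from a factor $2$ lost in the initial chain extraction plus another factor $2$ absorbed by the branching bookkeeping.
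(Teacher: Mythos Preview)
Your plan has a genuine gap, and it is precisely the obstacle you yourself flag at the end. You propose to branch into $2^{d-2}$ sub-boxes, extract a strong $C_h$ from each, and then pigeonhole $r$ of these chains into a strong copy of $K^h_r$. But this last step does not work. Suppose two of your selected chains live in sub-boxes that differ in the first halved coordinate, say chain $A$ sits in the low half and chain $B$ in the high half. Then \emph{every} element of $A$ is below \emph{every} element of $B$ in that coordinate; in particular the top element of $A$ is below the bottom element of $B$ there. For a strong copy of $K^h_r$ you need the level-$1$ element of $B$ to be strictly below the level-$h$ element of $A$ in every coordinate, which is now impossible. No amount of pigeonholing on sign patterns rescues this: once two chains are separated by a coordinate halving, the cross-level strict dominance requirement fails in that coordinate for one direction. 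There is also a secondary issue: taking the majority half at each step produces \emph{one} good sub-box, not $2^{d-2}$; if instead you take all $2^{d-2}$ sub-boxes, most of them need not contain enough of $\F$ to apply Lemma~\ref{strongchain}.

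The paper's argument avoids this coordination problem entirely by turning it inside out. Rather than building the $r$-element antichains from \emph{different} regions, it finds each antichain \emph{inside a single} $2\times\cdots\times 2$ block. Concretely, one partitions $[k]^d$ into $2\times\cdots\times 2$ blocks indexed by $[k/2]^d$, and looks at the set $\F_2\subset[k/2]^d$ of indices whose block meets $\F$ in at least $r(d+1)$ points. Since a $2^d$-cube has maximal chain length $d+1$, any such block contains an antichain of size $r$ in $\F$. If $\F_2$ contained a strong $C_h$, picking one $r$-antichain from each of the $h$ blocks along that chain immediately yields a strong $K^h_r$: within-level incomparability is automatic (each level is an antichain inside one block), and cross-level strict dominance is automatic (the blocks themselves form a strong chain in $[k/2]^d$). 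So Lemma~\ref{strongchain} bounds $|\F_2|$. The set $\F_1$ of nonempty block indices is itself (strong $K^h_r$)-free, handled by induction on $k$, and the hypothesis $r(d+1)\le 2^{d-2}$ is used exactly once, to make the light-block contribution $r(d+1)|\F_1|$ comparable to $2^{d-2}|\F_1|$. The factor $4$ comes from a final averaging to pass from powers of $2$ to general $k$.
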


\begin{proof}
	First, we shall prove by induction on $s$, that if $k=2^{s}$, then $|\F|\leq 2d(h-1)k^{d-1}$.
	
	If $s=0$, the statement is trivial. Suppose that $s>0$ and let $\F\subset [k]^{d}$ be a family not containing a strong copy of $K^{h}_{r}$. Divide $[k]^{d}$ into $2\times\dots\times 2$ sized blocks, that is, for $\mathbf{x}\in [k/2]^{d}$, let $B_{\mathbf{x}}=\{2\mathbf{x}-\mathbf{\epsilon}:\mathbf{\epsilon}\in\{0,1\}^{d}\}$. 
	
	Define the families $\F_{1},\F_{2}\subset [k/2]^{d}$ as follows. For $\mathbf{x}\in [k/2]^{d}$, let $\mathbf{x}\in \F_{1}$ if $B_{\mathbf{x}}\cap \F$ is non-empty, and let $\mathbf{x}\in \F_{2}$ if $|B_{\mathbf{x}}|\geq r(d+1)$. Clearly, we have $$|\F|\leq r(d+1)|\mathcal{F}_{1}|+2^{d}|\mathcal{F}_{2}|\leq 2^{d-2}|\mathcal{F}_{1}|+2^{d}|\mathcal{F}_{2}|.$$
	
	Firstly, note that $\F_{1}$ does not contain a strong copy of $K^{h}_{r}$. Indeed, if $S\subset \F_{1}$ is a strong copy of $K^{h}_{r}$, then the set $S'\subset \F$ we get by taking an arbitrary element from each of the blocks $B_{\mathbf{x}}$, $\mathbf{x}\in S$, is also a strong copy of $K^{h}_{r}$ in $\F$, contradiction. Hence, by our induction hypothesis, we have $|\F_{1}|\leq 2d(h-1)(k/2)^{d-1}$.
	
	Secondly, we claim that $\F_{2}$ does not contain a strong copy of a chain of size $h$. Suppose to the contrary, that $S\subset \F_{2} $ is a strong copy of a chain of size $h$. For each $\mathbf{x}\in S$, $B_{\mathbf{x}}\cap \F$ contains at least $r(d+1)$ elements. As the size of the largest chain in $B_{\mathbf{x}}$ is $d+1$, $B_{\mathbf{x}}\cap F$ also contains an antichain $S_{\mathbf{x}}$ of size $r$. But then $\bigcup_{\mathbf{x}\in S}S_{\mathbf{x}}$ is a strong copy of $K^{h}_{r}$, contradiction.
	Hence, Lemma \ref{strongchain} yields that $|\F_{2}|\leq d(h-1)(k/2)^{d-1}.$
	
	We conclude as 
	
	$$|\F|\leq 2^{d-2}|\mathcal{F}_{1}|+2^{d}|\mathcal{F}_{2}|\leq 2^{d-2}2d(h-1)(k/2)^{d-1}+2^{d}d(h-1)(k/2)^{d-1}=2d(h-1)k^{d-1}.$$
	
	The only case remaining is when $k$ is not a power of $2$. Let $2^{s}<k<2^{s+1}$ and let $\F\subset [k]^{n}$ be a family not containing a strong copy of $K^{h}_{r}$. By a simple averaging argument, $[k]^{n}$ contains a $2^{s}\times\dots\times 2^{s}$ sized subgrid $G$ such that $|G\cap \F|\geq (2^{s}/k)^{d}|\F|$. But we have $|G\cap \F|\leq 2d(h-1)2^{s(d-1)}$, which implies $$|\F|\leq 2d(h-1)k^{d-1}(k/2^{s})<4d(h-1)k^{d-1}.$$  
\end{proof}	
	
An immediate corollary of Lemma \ref{strongmultipartite} combined with Theorem \ref{gridtogrid} is Theorem \ref{weakgrid}.

\begin{proof}[Proof of Theorem \ref{weakgrid}]
	As $P$ is a poset of height $h$, it is a weak subposet of $K^{h}_{|P|}$. Setting $d=\lceil 2\log_{2}|P|\rceil+5$, we have $2^{d-2}/(d+1)\geq |P|$. Set $C=4d(h-1)$, then $C=O(h\log |P|)$, and by Lemma \ref{strongmultipartite}, we have $|\F_{0}|\leq Ck^{d-1}$ for every family $\F_{0}\subset [k]^{d}$ not containing an induced copy of $K^{h}_{|P|}$. 
	
	But then Lemma \ref{gridtogrid} implies that $|\F|=O(C\sqrt{d}w)=O(wh\log^{3/2}|P|)$.
\end{proof}

\subsection{Finding $P$ in dense families}\label{sect:dense}

In this section, we prove a  bound of the form $O_{|P|}(k^{d-1/h})$ on the size of (induced $P_{l}$)-free families in $[k]^{d}$. This result is too weak on its own, as we are looking for a bound of the form  $O_{|P|}(k^{d-1})$, but it provides a strong estimate when $k$ is bounded by some polynomial of $|P|$. We shall use this bound in  the proof of Theorem \ref{mainthm2} to handle certain irregular cases, when a family $\F$ has unusually high density in some small parts of $[k]^{d}$.

Let $r=\max_{1\leq i\leq h} |A_{h}|$ and let $d_{0}$ be the smallest positive integer such that $2^{d_{0}-2}/(d_{0}+1)\geq r$.

\begin{lemma}\label{densitylemma}
Let $k$ be a positive integer, $l\in\{0,1,\dots,q\}$ and $d=d_{0}+l$. If the family $\F\subset [k]^{d}$ does not contain a strong copy of $P_{l}$, then 
$$|\F|\leq (8d_{0}(h-1)+4lhk^{(h-1)/h})k^{d-1}.$$
\end{lemma}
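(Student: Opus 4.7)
The plan is to proceed by induction on $l$. The base case $l = 0$ is an immediate consequence of Lemma~\ref{strongmultipartite}: here $d = d_0$, and since $P_0 \cong K_{|A_1|,\dots,|A_h|}$ is an induced subposet of $K_r^h$, any strong copy of $K_r^h$ in $\F$ would contain a strong copy of $P_0$. Hence $\F$ avoids a strong copy of $K_r^h$, and as $d_0$ was chosen so that $2^{d_0-2}/(d_0+1) \geq r$, Lemma~\ref{strongmultipartite} gives $|\F| \leq 4d_0(h-1)k^{d_0-1}$, well within the claimed bound.

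For the inductive step, assume the claim for $l$ and consider $\F \subset [k]^d$ with $d = d_0 + l + 1$ avoiding a strong copy of $P_{l+1}$. My approach is to slice by the last coordinate: for each $\mathbf{x} \in [k]^{d-1}$, set $T(\mathbf{x}) = \{c \in [k] : (\mathbf{x}, c) \in \F\}$, and let $\F'_{\mathrm{tall}} = \{\mathbf{x} : |T(\mathbf{x})| \geq p\}$. Then $|\F| \leq k |\F'_{\mathrm{tall}}| + p\,k^{d-1}$. The main claim I will establish is that $\F'_{\mathrm{tall}}$, viewed inside $[k]^{d-1}$, avoids a strong copy of $P_l$. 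Given this, the inductive hypothesis in dimension $d-1$ bounds $k|\F'_{\mathrm{tall}}|$ by $(8d_0(h-1) + 4lhk^{(h-1)/h})k^{d-1}$, matching the first part of the target. The remaining $p\,k^{d-1}$ then fits inside the extra budget of $4hk^{(h-1)/h}k^{d-1}$ precisely when $k \geq (p/(4h))^{h/(h-1)}$; the small-$k$ regime is covered by the trivial volume bound $|\F| \leq k^d$.

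The crux is establishing the claim on $\F'_{\mathrm{tall}}$. Suppose for contradiction that $\sigma : P_l \to \F'_{\mathrm{tall}}$ is a strong copy of $P_l$ in $[k]^{d-1}$. I plan to lift it to a strong copy of $P_{l+1}$ in $\F$ by choosing $c_x \in T(\sigma(x))$ for each $x \in P_l$ and setting $\tau(x) = (\sigma(x), c_x)$. Writing $S = \{x \in P : z_{l+1} \leq_P x\}$, a case analysis (using that $\sigma$ is already strong in the first $d-1$ coordinates) shows that $\tau$ is a strong copy of $P_{l+1}$ exactly when $c_x < c_y$ for every $(x, y) \in \mathcal{R}(P_{l+1})$ and $c_x > c_y$ for every $(x, y) \in \mathcal{R}(P_l) \setminus \mathcal{R}(P_{l+1}) = (S \times (P \setminus S)) \cap \mathcal{R}(P_l)$. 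Organizing these constraints as directed edges on $P_l$, each edge $u \to v$ can be seen to satisfy the implication ``$u \in S \Rightarrow v \in S$''. Consequently, any directed cycle would be forced to lie entirely inside or entirely outside $S$, but then all its edges are of the first type above, reducing the cycle to one in $<_{P_l}$---a contradiction. The constraint digraph is therefore acyclic, and a topological sort combined with the fact that $|T(\sigma(x))| \geq p$ should yield the $c_x$'s greedily, producing the desired strong copy of $P_{l+1}$.

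The main obstacle I expect is making the greedy lifting step watertight: the naive greedy, walking along the topological sort and picking the smallest available element in each column above the previous choice, can fail if the columns are concentrated pathologically (for instance, if $T(\sigma(x))$ lies entirely below previously chosen values). Likely remedies include raising the tall threshold from $p$ to a small constant multiple (absorbed into the leading constant), partitioning $[k]$ into $p$ equal intervals and demanding each tall column meet every interval, or invoking a Hall-type theorem for monotone systems of distinct representatives. The remaining work---verifying the $S$-monotonicity of edges to establish acyclicity, and checking that $p k^{d-1}$ is absorbed in the large-$k$ regime---is routine once the greedy step is secured.
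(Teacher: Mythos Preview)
Your acyclicity analysis of the constraint digraph is correct and is essentially the combinatorial heart of the matter. The gap is precisely the one you flag, and none of your three remedies closes it. Raising the threshold to $Cp$ does nothing, because the obstruction is positional, not cardinal: for instance, when $P$ is two disjoint $2$-chains $z_{1}<z_{3}$, $z_{2}<z_{4}$, the step $P_{0}\to P_{1}$ produces the directed path $z_{2}\to z_{4}\to z_{1}\to z_{3}$ in your digraph, and if $T(\sigma(z_{1}))\subset[1,Cp]$ while $T(\sigma(z_{4}))\subset[Cp+1,2Cp]$ then no choice gives $c_{z_{4}}<c_{z_{1}}$, whatever $C$ is. Demanding that each tall column meet all $p$ equal intervals does make the lifting trivial, but then a non-tall column can still carry up to $k-k/p$ points, so the non-tall contribution to $|\F|$ is no longer $O(p)k^{d-1}$ and your counting collapses. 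There is no Hall-type theorem that guarantees an order-respecting SDR from sets of size $p$; the counterexample above already rules it out. (Incidentally, your small-$k$ bookkeeping also fails: for $k$ in the range $(8h)^{h}\ll k\ll (p/4h)^{h/(h-1)}$ the trivial bound $k^{d}$ already exceeds the target.)

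The paper's proof exploits the observation that your constraints on $c_{x}$ depend only on $\rk(x)$ and on whether $x\in S$, so that $2h$ ``slots'' rather than $p$ suffice. To realise these slots it sets $s=k^{1/h}$ and calls $\mathbf{x}\in\F$ \emph{good} if, for every $i\in[h]$, its column contains elements both above and below $\mathbf{x}(d)$ that share the same $s^{i}$-block but lie in a different $s^{i-1}$-block; these are the witnesses $\mathbf{x}^{(\pm i)}$. One then picks a dense slice $\G_{t}$ of good elements, finds a strong $P_{l-1}$ there by induction, and lifts by sending $x$ to $\mathbf{x}^{(\rk(x))}$ if $x\in S$ and to $\mathbf{x}^{(-(h+1-\rk(x)))}$ otherwise. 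The nested-block arithmetic forces these last coordinates to satisfy every edge of your digraph automatically. The cost is that the number of non-good elements is at most $2hk^{d-1}s^{h-1}=2hk^{(h-1)/h}k^{d-1}$, which is exactly where the term $4lhk^{(h-1)/h}$ in the lemma comes from; your would-be loss $pk^{d-1}$ is simply too small to be the right answer.
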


\begin{proof}
If $k\leq (8h)^{h}$, the statement is trivial as $$|\F|\leq k^{d}<(8d_{0}(h-1)+4lhk^{(h-1)/h})k^{d-1}.$$ Hence, we can assume that $k\geq 2h$.	

 First, suppose that $k=s^{h}$, where $s$ is a positive integer. The general case will follow using an averaging argument similar to that of in the proof of Lemma \ref{strongmultipartite}.  We shall prove by induction on $l$ that $$|\F|\leq (4d_{0}(h-1)+2lhk^{(h-1)/h})k^{d-1}$$ for any $\F\in [k]^{d}$ not containing a strong copy of $P_{l}$. (Let us remark that $d$ is a function of $l$, which we chose to not denote.) For $l=0$, we have $d=d_{0}$ and $P_{l}=K_{|A_{1}|,\cdots,|A_{h}|}$, which is an induced subposet of $K^{h}_{r}$. Hence, Lemma \ref{strongmultipartite} implies the desired result.

Now suppose that $l\geq 1$ and let $\F\subset [k]^{d}$ be a family not containing a strong copy of $P_{l}$. For $i=0,\dots,h$, define the equivalence relation $\equiv_{i}$ on $[k]$ such that $a\equiv_{i}b$ if $\lceil a/s^{i}\rceil=\lceil b/s^{i}\rceil$. Also, in case $i\geq 1$, let $a\rightarrow_{i} b$, if $a<b$, $a\equiv_{i} b$ but $a\not\equiv_{i-1} b$. Finally, for $\mathbf{x},\mathbf{y}\in [k]^{d}$, let $\mathbf{x}\equiv_{i}\mathbf{y}$ if $\mathbf{x}(j)=\mathbf{y}(j)$ for $j\in[d-1]$ and $\mathbf{x}(d)\equiv_{i} \mathbf{y}(d)$, and let $\mathbf{x}\rightarrow_{i}\mathbf{y}$ if $\mathbf{x}(j)=\mathbf{y}(j)$ for $j\in[d-1]$ and $\mathbf{x}(d)\rightarrow_{i} \mathbf{y}(d)$. 

We shall use the following property of $\rightarrow_{i}$. If $i<j$ and $a\rightarrow_{i} b$, $a\rightarrow_{j} c$, then $b\rightarrow_{j} c$, in particular, $b<c$. Also, if $b\rightarrow_{i} a$ and $c\rightarrow_{j} a$, then $c\rightarrow_{j} b$, in particular, $b<c$.

Say that an element $\mathbf{x}\in \F$ is \emph{good} if for $i=1,\dots,h$, there exist $\mathbf{x}^{(-i)},\mathbf{x}^{(i)}\in \F$ such that $\mathbf{x}^{(-i)}\rightarrow_{i} \mathbf{x}$ and $\mathbf{x}\rightarrow_{i}\mathbf{x}^{(i)}$. We can think of the good elements in the following way as well: divide $[k]^{d}$ perpendicular to the direction of the $d$-th dimension into equal blocks of height $s^{i}$ for $i=1,\dots,h$. Then $\mathbf{x}\in\F$ is good if for $i=1,\dots,h$ there are elements of $\F$ below and above $\mathbf{x}$ that are in the same $s^{i}$ sized block as $\mathbf{x}$, but not in the same $s^{i-1}$ sized block.
 
Let $\mathcal{G}$ be the family of the good elements of $\F$.

\begin{claim}
 $$|\mathcal{G}|\geq|\F|-2hk^{d-1}s^{h-1}.$$
\end{claim}

\begin{proof}
Let $B_{i}$ be the family of elements $\mathbf{x}\in \F$ such that there is no $\mathbf{x}^{(-i)}$ or $\mathbf{x}^{(i)}$ as defined above. 

Note that $[k]^{d}$ has $k^{d-1}s^{h-i}$ pieces of $\equiv_{i}$ equivalence classes, each of size $s^{i}$. Let $E$ be such an equivalence class. Also, $\equiv_{i-1}$ further divides $E$  into $s$ pieces of $\equiv_{i-1}$ equivalence classes $E_{1},\dots,E_{s}$ in increasing order, each of size $s^{i-1}$. However, $B_{i}\cap E$ contains those elements of $\F$ that fall into $E_{a}$ and $E_{b}$, where $a$ is the smallest index such that $\F\cap E_{a}\neq \emptyset$ and $b$ is the largest index such that $\F\cap E_{b}\neq\emptyset$ (if $E\cap \F$ is non-empty). But then $|B_{i}\cap E|\leq |E_{a}|+|E_{b}|\leq 2s^{i-1}$, and as this is true for every $\equiv_{i}$ equivalence class $E$, we get $|B_{i}|\leq 2k^{d-1}s^{h-1}$. Then we are finished as
$$|\mathcal{G}|=|\F\setminus\bigcup_{i=1}^{h}B_{i}|\geq |\F|-2hk^{d-1}s^{d-1}.$$
\end{proof}

By averaging, there exists $t\in [k]$ such that the $(d-1)$-dimensional slice 
$$\mathcal{G}_{t}=\{(x_{1},\dots,x_{d-1}):(x_{1},\dots,x_{d-1},t)\in \mathcal{G}\}\subset[k]^{d-1}$$
contains at least $|\mathcal{G}|/k$ elements. We shall conclude the proof of this lemma by showing that $\mathcal{G}_{t}$ does not contain a strong copy of $P_{l-1}$. Suppose to the contrary that $U=\{\mathbf{u}_{1},\dots,\mathbf{u}_{p}\}\subset \mathcal{G}_{t}$ is a strong copy of $P_{l-1}$, where $\mathbf{u}_{i}$ corresponds to the vertex $z_{i}\in P$. We use these vertices as a reference to find a strong copy of $P_{l}$ in $\F$. 

For $i\in [p]$, let $\mathbf{x}_{i}=(\mathbf{u}_{i}(1),\dots,\mathbf{u}_{i}(d-1),t)\in \G$, and let $X=\{\mathbf{x}_{1},\dots,\mathbf{x}_{p}\}$. Also, define the bijection $\pi: P\rightarrow X$ by setting $\pi(z_{i})=\mathbf{x}_{i}$.

Define the points $\mathbf{y}_{i}\in \F$ as follows. Let $r=\rk(z_{i})$ and $S=\{z\in P: z_{l}\leq_{P_{l}} z\}$. Then

$$\mathbf{y}_{i}=\begin{cases} \mathbf{x}_{i}^{(r)} &\mbox{if } z_{i}\in S,\\
\mathbf{x}_{i}^{(r-1-h)} & \mbox{if } z_{i}\not\in S. \end{cases}$$

See Figure \ref{figure1} for an illustration of $X$ and $Y$.

	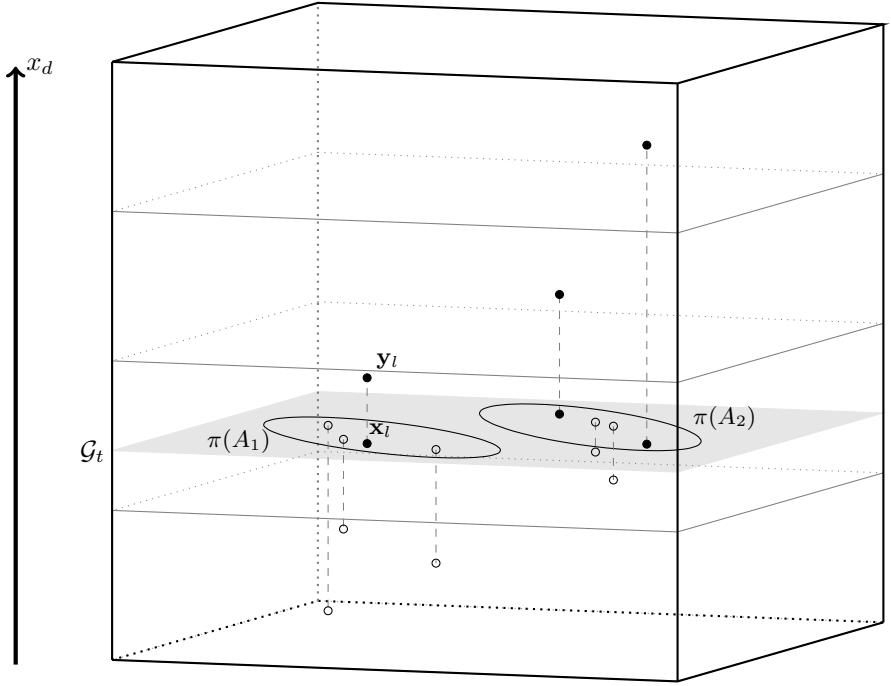
\begin{figure}
		\begin{center}
\tdplotsetmaincoords{84}{20}
\tdplotsetrotatedcoords{0}{0}{0}%
\begin{tikzpicture}
[tdplot_rotated_coords,
cube/.style={thin,black},
grid/.style={thin,white!90!black},
grid2/.style={thin,gray},
rotated axis/.style={->,black,ultra thick},
projection/.style={dashed,->,blacked, thin},
declare function = {N=8;},
declare function = {h=0.35*N;}
]


{
	\filldraw[grid] (0,0,h) -- (0,N,h) -- (N,N,h) -- (N,0,h) -- (0,0,h) ;
}

\foreach \y in {0,N/4,N/2,3*N/4,N}
{
	\draw[thin, gray] (0,0,\y) -- (N,0,\y);
	\draw[thin, gray] (N,0,\y) -- (N,N,\y);
	\draw[thin, gray,dotted] (N,N,\y) -- (0,N,\y);	
	\draw[thin,gray,dotted] (0,N,\y) -- (0,0,\y);
}

\draw[rotated axis] (-1,-1,0) -- (-1,-1,N) node[anchor=west]{$x_{d}$};

\draw[thick, black, dotted] (0,0,0) -- (0,N,0) -- (N,N,0) ;
\draw[thick, black] (N,N,0)-- (N,0,0) -- (0,0,0);
\draw[thick, black] (0,0,N) -- (0,N,N) -- (N,N,N) -- (N,0,N) -- (0,0,N);

\draw[thick, black] (0,0,0) -- (0,0,N);
\draw[thick, gray, dotted] (0,N,0) -- (0,N,N);
\draw[thick, black] (N,0,0) -- (N,0,N);
\draw[thick, black] (N,N,0) -- (N,N,N);

\coordinate (p1) at (N*0.2,N*0.5,h){} ;
\coordinate (q1) at (N*0.2,N*0.5,h-0.31*N){} ;
\coordinate (p2) at (N*0.5,N*0.2,h){} ;
\coordinate (q2) at (N*0.5,N*0.2,h-0.19*N){} ;
\coordinate (p3) at (N*0.3,N*0.3,h){} ;
\coordinate (q3) at (N*0.3,N*0.3,h-0.15*N){} ;
\coordinate (p4) at (N*0.36,N*0.25,h){} ;
\coordinate (q4) at (N*0.36,N*0.25,h+0.11*N){} ;

\draw[domain=-180:180,smooth,variable=\x,black] plot ({0.35*N+0.25*N*sin(\x)+0.1*N*cos(\x)},{0.35*N+0.1*N*cos(\x)-0.25*N*sin(\x)},h) node[xshift=-25] {\small $\pi(A_{1})$};

\coordinate (p5) at (N*0.8,N*0.4,h){} ;
\coordinate (p6) at (N*0.65,N*0.65,h){} ;
\coordinate (p7) at (N*0.6,N*0.7,h){} ;
\coordinate (p8) at (N*0.5,N*0.8,h){} ;
\coordinate (q5) at (N*0.8,N*0.4,h+0.5*N){} ;
\coordinate (q6) at (N*0.65,N*0.65,h-0.09*N){} ;
\coordinate (q7) at (N*0.6,N*0.7,h-0.05*N){} ;
\coordinate (q8) at (N*0.5,N*0.8,h+0.2*N){} ;

\draw[domain=-180:180,smooth,variable=\x,black] plot ({0.62*N+0.25*N*sin(\x)+0.1*N*cos(\x)},{0.62*N+0.1*N*cos(\x)-0.3*N*sin(\x)},h) node[xshift=80,yshift=5] {\small $\pi(A_{2})$};

\foreach \i in {1,2,...,8}
{
	\draw [dashed, gray] (p\i)--(q\i);
}
\foreach \i in {4,5,8}
{
    \draw[fill=black] (p\i) circle (0.15em) ;
    \draw[fill=black] (q\i) circle (0.15em) ;
}

\foreach \i in {1,2,3,6,7}
{  
	\draw[] (p\i) circle (0.15em) ;
    \draw[] (q\i) circle (0.15em) ;	
}

\node[xshift=5, yshift=5]  at (p4) {\small $\mathbf{x}_{l}$} ;
\node[above right]  at (q4) {\small $\mathbf{y}_{l}$} ;


{\node at (-0.2,-0.2,h) {$\mathcal{G}_{t}$};}
\end{tikzpicture}
\end{center}
\caption{An illustration of how we construct the strong copy of the height $2$ poset $P_{l}$ after finding a strong copy of $P_{l-1}$ in $\mathcal{G}_{t}$. Here, $\bullet$ is used for the elements of $\pi(S)$ and their image in the copy of $P_{l}$, and $\circ$ is used for the elements of $\pi(P\setminus S)$ and their image. } 
\label{figure1}
\end{figure}

\begin{claim}
	$Y=\{\mathbf{y}_{1},\dots,\mathbf{y}_{p}\}$ is a strong copy of $P_{l}$.
\end{claim}

\begin{proof}  First, we show that $Y$ is an induced copy of $P$. Let $t_{1},\dots,t_{p}$ be the $d$-th coordinates of $\mathbf{y}_{1},\dots,\mathbf{y}_{p}$, respectively. Clearly, we have $\mathbf{y}_{i}\preceq y_{j}$ if and only if $\mathbf{x}_{i}\preceq \mathbf{x}_{j}$ and $t_{i}\leq t_{j}$.
	
	Let $\mathcal{R}=\{(z_{i},z_{j})\in P^{2}:\mathbf{y}_{i}< \mathbf{y}_{j}\}$,  $\mathcal{R}_{1}=\{(z_{i},z_{j})\in P^{2}:\mathbf{x}_{i}< \mathbf{x}_{j}\}$ and  $$\mathcal{R}_{2}=\{(z_{i},z_{j})\in P^{2}:i\leq j\mbox{ and }t_{i}< t_{j}\}.$$ We have $\mathcal{R}_{1}=\mathcal{R}(P_{l-1})$ as $X$ is an induced copy of $P_{l-1}$, moreover $\mathcal{R}=\mathcal{R}_{1}\cap \mathcal{R}_{2}$. We want to show that $\mathcal{R}=\mathcal{R}(P_{l})$. Let $z_{i}\in A_{a}$, $z_{j}\in A_{b}$ such that $i<j$ and $a\leq b$. If $a=b$, then $(z_{i},z_{j})\not\in\mathcal{R}_{1}$ as $\mathcal{R}_{1}=\mathcal{R}(P_{l-1})\subset \mathcal{R}(P_{0})$. Now suppose that $a<b$. If $z_{i}\in S$, we have $t\rightarrow_{a} t_{i}$, and if $z_{i}\not\in S$, $t_{i}\rightarrow_{h+1-a} t$. Similarly, if $z_{j}\in S$ , we have $t\rightarrow_{b} t_{j}$, otherwise, $t_{j}\rightarrow_{h+1-b} t$. Hence, $t_{i}\not<t_{j}$ if and only if $z_{i}\in S$ and $z_{j}\in P\setminus S$. But then 
	$$\mathcal{R}=\mathcal{R}_{1}\cap\mathcal{R}_{2}=\mathcal{R}_{1}\setminus (S\times (P\setminus S))=\mathcal{R}(P_{l}),$$ so $Y$ is truly an induced copy of $P$.
	
	But $Y$ is also a strong copy of $P$. To this end, let $1\leq i<j\leq p$ such that $\mathbf{y}_{i}< \mathbf{y}_{j}$. Then $\rk(z_{i})<\rk(z_{j})$. Also, the first $d-1$ coordinates of $\mathbf{y}_{i}$ and $\mathbf{y}_{j}$ form $\mathbf{u}_{i}$ and $\mathbf{u}_{j}$, respectively, and $U$ is a strong copy of $P_{l-1}$, so we have strict inequality in the first $d-1$ coordinates. Moreover, we have $t_{i}<t_{j}$ as  $t_{i}=t_{j}$ can only occur if $\rk(z_{i})=\rk(z_{j})$.
	
\end{proof}

We showed that $\mathcal{G}_{t}$ does not contain a strong copy of $P_{l-1}$, so our induction hypothesis yields that 
$$|\mathcal{G}_{t}|\leq (4d_{0}(h-1)+2(l-1)hs^{h-1})k^{d-2}.$$

But we also have 
$$|\mathcal{G}_{t}|\geq |\mathcal{G}|/k\geq (|\F|-2hk^{d-1}s^{h-1})/k,$$
which implies
$$|\F|\leq (4d_{0}(h-1)+2lhs^{h-1})k^{d-1}.$$

This finishes the proof of the case when $\sqrt[h]{k}$ is an integer. Now suppose that $s^{h}<k<(s+1)^{h}$, where $s$ is a positive integer. As $k> (8h)^{h}$, we have $s\geq 4h$, so $k/s^{h}<(s+1)^{h}/s^{h}<(1+1/4h)^{h}<2$. Let $\F\subset [k]^{d}$ be a family not containing a strong copy of $P_{l}$. By a simple averaging argument, $[k]^{d}$ contains a $s^{h}\times\dots\times s^{h}$ sized subgrid $G$ such that $|G\cap \F|\geq (s^{h}/k)^{d}|\F|$. But we have $|G\cap \F|\leq (4d_{0}(h-1)+2lhs^{h-1})s^{h(d-1)}$, which implies $$|\F|\leq (4d_{0}(h-1)+2lhs^{h-1})s^{h(d-1)}(k/s^{h})^{d}<(8d_{0}(h-1)+4lhk^{(h-1)/h})k^{d-1}.$$
\end{proof}

Let us remark that Lemma \ref{densitylemma} shows a real difference between the forbidden matrix pattern problem and the forbidden induced subposet problem. Fox \cite{Fox} showed the existence of an $l\times l$ sized $2$-dimensional permutation pattern $A$ such that for $k=2^{\Omega(l^{1/4})}$, we have
$$\frac{\ex(k,A)}{k^{2}}>1-O(l^{-1/2}).$$
 In other words, there are exponentially sized matrices of unusually high relative weight avoiding $A$. In contrast, Lemma \ref{densitylemma} tell us that families in $[k]^{d}$ of positive density must contain an induced copy of $P$, if $k=|P|^{\Omega_{h}(1)}$.

\subsection{Finding $P$ in sparse families}\label{sect:sparse}

Now Theorem \ref{mainthm2} is an immediate consequence of the following lemma, noting that $P_{q}=P$ and $d_{0}+q+1\leq 2|P|$. Here, $d_{0}$ is the constant defined in the beginning of Section \ref{sect:dense}.

\begin{lemma}\label{mainlemma}
There exists a constant $c(h)$ depending only on $h$ such that the following holds. Let $k$ be a positive integer, $l\in\{0,1,\dots,q\}$ and $d=d_{0}+l+1$. If the family $\F\subset [k]^{d}$ does not contain a strong copy of $P_{l}$, then 
$$|\F|\leq |P|^{c(h)}k^{d-1}.$$
\end{lemma}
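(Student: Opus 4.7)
The plan is to prove Lemma \ref{mainlemma} by induction on $l$. For the base case $l=0$, the poset $P_0$ is isomorphic to the complete multilevel poset $K_{|A_1|,\dots,|A_h|}$, which is a subposet of $K^h_r$ with $r=\max_i|A_i|\leq|P|$. Since $d=d_0+1$ satisfies $2^{d-2}/(d+1)\geq r$ by the definition of $d_0$, Lemma \ref{strongmultipartite} yields $|\F|\leq 4d(h-1)k^{d-1}=O(h\log|P|)k^{d-1}$, which is at most $|P|^{c(h)}k^{d-1}$ for any sufficiently large $c(h)$.

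For the inductive step, I assume the statement for $l-1$ (in dimension $d-1=d_0+l$) and consider $\F\subset [k]^d$ avoiding a strong copy of $P_l$. The approach combines Lemma \ref{densitylemma} with the inductive hypothesis, splitting by the size of $k$ relative to a threshold $k_0:=|P|^{c_1(h)}$. In the small-$k$ regime ($k\leq k_0$), Lemma \ref{densitylemma} applied in dimension $d-1=d_0+l$, together with a thresholding step on column densities $\F_\mathbf{u}:=\{t:(\mathbf{u},t)\in\F\}$ that reduces the problem to a $(d-1)$-dimensional family avoiding strong $P_l$ via the shifting construction of Section \ref{sect:dense}, yields $|\F|\leq O_h(lk^{(h-1)/h})k^{d-1}$. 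For $k\leq k_0$ and $l\leq |P|$, this is at most $|P|^{c(h)}k^{d-1}$ provided $c(h)\geq 1+c_1(h)(h-1)/h+O(1)$.

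In the large-$k$ regime ($k>k_0$), I invoke the inductive hypothesis on $P_{l-1}$. If $|\F|>|P|^{c(h)}k^{d-1}$, some slice $\F_t\subset [k]^{d-1}$ has $|\F_t|>|P|^{c(h)}k^{d-2}$ and hence contains a strong copy of $P_{l-1}$ by the induction hypothesis. Let $\pi:P\to\F_t$ realize this copy and set $S:=\{x\in P:z_l\leq_P x\}$. Following the idea of Section \ref{sect:dense}, one upgrades this to a strong copy of $P_l$ in $\F$ by replacing each $\pi(z)=(\mathbf{y}_z,t)$ with $(\mathbf{y}_z,t+\rk(z))$ for $z\in S$ and $(\mathbf{y}_z,t+\rk(z)-h)$ for $z\notin S$: this shift breaks precisely the comparabilities in $S\times(P\setminus S)$ that were removed from $P_{l-1}$ to $P_l$, while preserving strict inequality in every coordinate of remaining comparable pairs.

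The main obstacle is twofold. First, the upgrade requires all shifted points to lie in $\F$; a single strong $P_{l-1}$ in one slice need not admit the shifts, so the argument must average simultaneously over many strong copies of $P_{l-1}$ and over many slices $t\in [h,k-h]$, ensuring (by a double-counting or pigeonhole argument) that a successful configuration appears whenever $|\F|$ exceeds the target. Second, the constant $c(h)$ must remain a function of $h$ alone along the induction: a naive iteration over $l$ gives $c(h,l)=\Omega(l)$, and hence $|P|^{\Omega(|P|)}$, which is super-polynomial rather than polynomial in $|P|$. The remedy is to fix a single $c(h)$ depending only on $h$ at the outset, large enough that both the small-$k$ and large-$k$ arguments close with the same constant, which is the delicate bookkeeping at the heart of the proof.
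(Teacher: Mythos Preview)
Your proposal correctly identifies the inductive structure on $l$, the base case via Lemma~\ref{strongmultipartite}, and the upgrade mechanism from a strong $P_{l-1}$ in a slice to a strong $P_l$ in $\F$. You also correctly diagnose the two real obstacles. However, you do not resolve either of them, and the paper's resolution is substantially different from what you sketch.

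The first obstacle---that the shifted points must lie in $\F$---cannot be handled by an unspecified ``averaging over many strong copies.'' The paper instead restricts attention to \emph{good} elements: those $\mathbf{x}\in\F$ that have, for every scale $i=1,\dots,h$, neighbours $\mathbf{x}^{(-i)},\mathbf{x}^{(i)}\in\F$ above and below in the $d$-th coordinate at distance roughly $s_i$ but not $s_{i-1}$, where $1=s_0<s_1<\cdots<s_{h-1}<s_h=k$ is a fixed hierarchy of scales with $s_{i+1}\approx (p^3hs_i)^{2h^2}$. For good $\mathbf{x}$ the upgrade is automatic, so the good family $\mathcal{G}$ satisfies $|\mathcal{G}|\leq C_{l-1}k^{d-1}$ by induction on $l$ exactly as you describe.

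The second obstacle---keeping $c(h)$ independent of $l$---is resolved by bounding the \emph{non-good} elements. This is where the proof is a Marcus--Tardos-style argument with a second induction on $k$. At each scale $s_i$ one partitions $[k]^d$ into $s_i$-blocks and classifies them as light ($|\F\cap B|\leq s_i^{d-1}/p$), fat ($|\pr(\F\cap B)|\geq s_i^{d-1}/p^2s_{i-1}$), or medium. Light block indices avoid strong $P_l$ and are bounded by the induction on $k$; fat block indices have few per column by Lemmas~\ref{intersectionlemma} and~\ref{densitylemma}; and in medium blocks the non-good elements are at most a $2/p$ fraction of $\F$. Summing over the $h$ scales gives $|\F\setminus\mathcal{G}|\leq (2h/p)|\F|+\text{(lower order)}$, whence the recursion $C_l\leq (1+8h/p)C_{l-1}$. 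After $q\leq p$ steps this yields $C_q\leq e^{8h}C_0=|P|^{2^{O(h\log h)}}$, which depends only on $h$.

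Your two-regime split on $k$ is not enough: the light-block bound genuinely requires recursion on $k$, and without the light/fat/medium trichotomy there is no mechanism producing the crucial $(1+O(h/p))$ factor that prevents $c(h)$ from growing with $l$.
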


The proof of this lemma shares similar elements with the proof of Marcus and Tardos \cite{MT} on matrix patterns, but it is more involved. Being familiar with their proof might be helpful, but it is not necessary. Before we can embark on our proof, we need the following technical lemma.

\begin{lemma}\label{intersectionlemma}
	Let $0<\alpha<1/2$, let $V$ be a finite set  and let $V_{1},\dots,V_{m}\subset V$ such that $|V_{i}|\geq \alpha|V|$. If $m\geq 2h/\alpha$, then there exists $I\subset [m]$, $|I|=h$ such that $$\left|\bigcap_{i\in I}V_{i}\right|\geq (\alpha/12)^{h+1}|V|.$$
\end{lemma}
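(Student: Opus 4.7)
The plan is to run a standard double counting / Markov-style averaging on incidences. For each $v \in V$, let $d(v) = |\{i \in [m] : v \in V_i\}|$. Counting incidences gives $\sum_{v \in V} d(v) = \sum_{i=1}^m |V_i| \geq \alpha m |V|$. Setting $V' = \{v \in V : d(v) \geq \alpha m / 2\}$ and using the trivial bound $d(v) \leq m$ for all $v$, the usual Markov-type split forces $|V'| \geq (\alpha/2) |V|$. Note that the hypothesis $m \geq 2h/\alpha$ guarantees $\alpha m / 2 \geq h$, so every $v \in V'$ satisfies $d(v) \geq h$, which is what lets us work with $\binom{d(v)}{h}$.

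Next, I would double count $h$-subsets of indices weighted by their common hits on $V'$. On one hand,
$$\sum_{v \in V'} \binom{d(v)}{h} = \sum_{I \in \binom{[m]}{h}} \left|V' \cap \bigcap_{i \in I} V_i\right|,$$
and on the other hand, the left side is at least $|V'| \binom{\lceil \alpha m / 2 \rceil}{h}$ by monotonicity of $\binom{\cdot}{h}$. Averaging over the $\binom{m}{h}$ choices of $I$ produces some $I$ with $|\bigcap_{i \in I} V_i| \geq |V'| \binom{\lceil \alpha m / 2 \rceil}{h} / \binom{m}{h}$. The elementary inequalities $\binom{n}{h} \geq (n/h)^h$ for $n \geq h$ and $\binom{n}{h} \leq (en/h)^h$ make this ratio at least $(\alpha/(2e))^h$, so combining with $|V'| \geq \alpha|V|/2$ gives an intersection of size at least $(\alpha/2)(\alpha/(2e))^h |V|$.

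The only step that requires care is checking that this quantity exceeds the target $(\alpha/12)^{h+1} |V|$. Since $12 > 2e$, we have $(\alpha/(2e))^h \geq (12/(2e))^h (\alpha/12)^h$, and the prefactor $\alpha/2$ absorbs the remaining $1/12$, because $(6/e)^h \geq 1/6$ holds for every $h \geq 0$. The generous constant $12$ in the statement (rather than something like $2e$) is precisely what provides enough slack to absorb the loss in the Markov step, the ceiling in $\lceil \alpha m / 2 \rceil$, and the gap between $\binom{n}{h}$ and its asymptotic forms. I do not anticipate any real obstacle here beyond bookkeeping of these constants; the structure of the argument is entirely standard.
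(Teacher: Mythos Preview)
Your proof is correct and follows essentially the same averaging/pigeonhole strategy as the paper. The only technical difference is that the paper first truncates to the initial $M=\lceil 2h/\alpha\rceil$ sets, lets $W$ be the elements covered at least $h$ times among those $M$ sets (rather than $\alpha m/2$ times among all $m$), and then pigeonholes over just $\binom{M}{h}\le (12/\alpha)^{h}$ index-subsets---slightly leaner bookkeeping that sidesteps the ratio $\binom{\lceil\alpha m/2\rceil}{h}/\binom{m}{h}$ but arrives at the same bound.
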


\begin{proof}
	Let $M=\lceil 2h/\alpha \rceil$. Let $W$ be the set of elements $v\in V$ that are contained in at least $h$ different sets among $V_{1},\dots,V_{M}$. Then 
	$$h|V|+M|W|\geq \sum_{i}^{M} |V_{i}|\geq \alpha M|V|,$$
	which gives $|W|\geq (\alpha-h/M)|V|\geq\alpha|V|/2$. For each element of $v\in W$, choose $h$ sets among $V_{1},\dots,V_{M}$ containig $v$. As there are at most $\binom{M}{h}\leq (eM/h)^{h}\leq (12/\alpha)^{h}$ choices for these $h$ sets, there are $h$ different sets $V_{i_{1}},\dots,V_{i_{h}}$, whose intersection has size at least $$|W|(\alpha/12)^{h}\geq |V|(\alpha/12)^{h+1}.$$
\end{proof}

\begin{proof}[Proof of Lemma \ref{mainlemma}]
To overcome certain divisibility conditions and to avoid the use of floors and ceilings, we first consider the case when $k$ is a power of $2$, and also set certain other parameters to be powers of $2$. Then, the inequality for general $k$ follows from the same averaging argument as in Lemma \ref{strongmultipartite}, where we lose only a factor of $2$, which is absorbed by the term $|P|^{c(h)}$. Define the sequence $s_{0},\dots,s_{h}$ as follows. Let $s_{0}=1$, and if $s_{i}$ is already defined for $i\in \{0,\dots,h-2\}$, set $s_{i+1}$ to be the smallest power of $2$ larger than $$(100p^{3}hs_{i})^{2h^2}.$$ Finally, set $s_{h}=k$. Note that for $i\in [h-1]$, $s_{i}=p^{2^{O(i\log h)}}$. In particular, $s_{h-1}=p^{2^{O(h\log h)}}$.

Also, define the constants $C_{0},\dots C_{q}$ as follows. Let $C_{0}=2h^{2}p^{3}s_{h-1}^{2}$ and if $C_{l}$ is already defined, let $C_{l+1}=(1+8h/p)C_{l}$. Note that $C_{q}<(1+8h/p)^{p}C_{0}<e^{8h}C_{0}$, so there exists a function $c(h)=2^{O(h\log h)}$ such that $C_{q}<p^{c(h)}$.

We shall prove by induction on $l$ that if $\F\subset [k]^{d}$ does not contain a strong copy of $P_{l}$, then ${|\F|\leq C_{l}k^{d-1}}$. Let us draw the attention of our reader to the detail that $d$ is a function of $l$ both here and in the rest of the proof, which we choose not to denote for easier readability. Also, this $d$ is exactly $1$ larger than the corresponding $d$ defined in Lemma \ref{densitylemma}.

 If $l=0$, we have $P_{l}=K_{|A_{1}|,\dots,|A_{h}|}$ for which  Claim \ref{strongmultipartite} implies $|\F|\leq 4d(h-1)k^{d}< C_{0}k^{d-1}$. Now suppose that $l\geq 1$. We shall proceed by induction on $k_{0}$, where $k=2^{k_{0}}$. If $k\leq C_{l}$, then the statement is trivial as $|\F|\leq k^{d}\leq C_{l}k^{d-1}.$ So suppose that $k>C_{l}$, then $k>s_{h-1}$ holds as well.

For a subset $A\subset [k]^d$, let $\pr(A)$ denote the $(d-1)$-dimensional projection of $A$ along the $d$-th coordinate axis, that is, $$\pr(A)=\{(x_{1},\dots,x_{d-1}): \exists x_{d}\mbox{ s.t. }(x_{1},\dots,x_{d})\in A\}.$$

 For $i=1,\dots,h-1$, divide $[k]^{d}$ into $s_{i}\times\dots\times s_{i}$ sized blocks $B^{i}_{\mathbf{u}}$, where $\mathbf{u}\in [k/s_{i}]^{d}$ and $$B^{i}_{\mathbf{u}}=\{s_{i}\mathbf{u}-\mathbf{v}:\mathbf{v}\in \{0,\dots,s_{i}-1\}^{d}\}.$$ 
 As we choose $k$ and $s_{i}$ to be powers of $2$, and $k\geq s_{h-1}$, $k/s_{i}$ is an integer. Let us label the elements $\mathbf{u}\in[k/s_{i}]^{d}$ (and therefore the blocks) according to the behaviour of $B^{i}_{\mathbf{u}}\cap\F$. Say that 
 
 $$\mathbf{u}\mbox{ is }\begin{cases}\mbox{empty,} &\mbox{ if }  B^{i}_{\mathbf{u}}\cap \F=\emptyset;\\
 									 \mbox{light,} &\mbox{ if } 1\leq |B^{i}_{\mathbf{u}}\cap \F|\leq s_{i}^{d-1}/p;\\
 									 \mbox{fat,}   &\mbox{ if } |\pr (B^{i}_{\mathbf{u}}\cap\F)|\geq s_{i}^{d-1}/p^{2}s_{i-1};\\
 									 \mbox{medium,}&\mbox{  otherwise.} \end{cases}$$

Let $\mathcal{I}^{i}_{1}$ be the family of light elements of $[k/s_{i}]^{d}$, $\mathcal{I}^{i}_{2}$ be the family of fat elements, and $\mathcal{I}^{i}_{3}$ be the family of medium elements. Also, for $j\in [3]$, let $\F^{i}_{j}=\bigcup_{\mathbf{u}\in\mathcal{I}^{i}_{j}} (B^{i}_{\mathbf{u}}\cap\F$). 

\begin{claim}\label{lightblocks}
$|\mathcal{I}^{i}_{1}|\leq C_{l}(k/s_{i})^{d-1}.$
\end{claim}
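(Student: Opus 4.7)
The plan is to show that the family $\mathcal{I}^{i}_{1} \subset [k/s_{i}]^{d}$ itself contains no strong copy of $P_{l}$, and then apply the induction hypothesis on $k$ (the outer induction on $k_0$, where $k=2^{k_0}$) to the smaller grid $[k/s_i]^d$, which is a power of two strictly smaller than $k$ since $s_i \geq 2$.

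To see the $P_{l}$-freeness, suppose for contradiction that $\mathbf{u}_{1},\dots,\mathbf{u}_{p} \in \mathcal{I}^{i}_{1}$ form a strong copy of $P_{l}$, with $\mathbf{u}_{j}$ corresponding to $z_{j}$. For each $j$, pick an arbitrary $\mathbf{x}_{j} \in B^{i}_{\mathbf{u}_{j}} \cap \F$, which is possible since light blocks are non-empty. The key observation is that every coordinate of $B^{i}_{\mathbf{u}}$ lies in the interval $[s_i \mathbf{u}(a)-s_i+1, s_i \mathbf{u}(a)]$, so if $\mathbf{u}_{j}(a) < \mathbf{u}_{j'}(a)$, then automatically $\mathbf{x}_{j}(a) \leq s_{i}\mathbf{u}_{j}(a) < s_{i}\mathbf{u}_{j'}(a) - s_{i} + 1 \leq \mathbf{x}_{j'}(a)$, i.e. the strict inequality transfers.

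I then verify that $\{\mathbf{x}_1,\dots,\mathbf{x}_p\}$ is a strong copy of $P_l$: for any pair $z_j <_{P_l} z_{j'}$, the strong copy property gives $\mathbf{u}_{j}(a) < \mathbf{u}_{j'}(a)$ in every coordinate $a$, which by the observation above yields $\mathbf{x}_{j}(a) < \mathbf{x}_{j'}(a)$ in every coordinate; for incomparable pairs, pick coordinates $a,b$ witnessing the incomparability of $\mathbf{u}_{j}$ and $\mathbf{u}_{j'}$ (which exist since an induced copy preserves incomparability), and transfer these opposite strict inequalities to $\mathbf{x}_{j}$ and $\mathbf{x}_{j'}$, showing they are incomparable in $[k]^d$. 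This contradicts $\F$ being strong-$P_{l}$-free.

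Therefore $\mathcal{I}^{i}_{1}$ is a strong-$P_{l}$-free family in $[k/s_{i}]^{d}$, and applying the induction hypothesis on $k_0$ (valid since $k/s_{i}$ is a power of $2$ strictly smaller than $k$) gives $|\mathcal{I}^{i}_{1}| \leq C_{l}(k/s_{i})^{d-1}$. The only subtlety to watch is that the induction on $k_0$ must already be set up at the outer layer of the argument, which it is; the light-density bound $|B^i_{\mathbf{u}} \cap \F| \leq s_i^{d-1}/p$ plays no role here and is reserved for the complementary estimates on medium and fat blocks. No real obstacle arises in this claim since it is the "easy" regime where non-emptiness of a block is already enough to rule out transferring a strong copy downward.
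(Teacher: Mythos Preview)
Your proof is correct and follows exactly the same approach as the paper: show that $\mathcal{I}^{i}_{1}$ contains no strong copy of $P_{l}$ (since arbitrary representatives from the corresponding blocks would yield a strong copy in $\F$), then invoke the induction hypothesis on $k_{0}$. You have simply written out in full the coordinate-wise verification that the paper leaves implicit.
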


\begin{proof}
 Note that $\mathcal{I}^{i}_{1}$ does not contain a strong copy of $P_{l}$. Otherwise, if $\mathbf{x}_{1},\dots,\mathbf{x}_{p}\in \mathcal{I}^{i}_{1}$ are the vertices of a strong copy of $P_{l}$, then for any choice $\mathbf{y}_{1}\in B^{i}_{\mathbf{x}_{1}},\dots,\mathbf{y}_{p}\in B^{i}_{\mathbf{x}_{p}}$, the vertices $\mathbf{y}_{1},\dots,\mathbf{y}_{p}$ form a strong copy of $P_{l}$ in $\F$. Hence, by our induction hypothesis, we have $|\mathcal{I}^{i}_{1}|\leq C_{l}(k/s_{i})^{d-1}$. 
\end{proof}

Now let us bound the number of elements of $\F$ contained in a fat box.

\begin{claim}\label{fatblocks}
$|\mathcal{I}^{i}_{2}|\leq 2hp^{2}s_{i-1}(k/s_{i})^{d-1}.$
\end{claim}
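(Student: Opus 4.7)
The plan is to show that for each fixed $\mathbf{u}' \in [k/s_i]^{d-1}$, the number of indices $j \in [k/s_i]$ such that $(\mathbf{u}', j) \in \mathcal{I}^i_2$ is at most $2hp^2 s_{i-1}$; summing over the $(k/s_i)^{d-1}$ choices of $\mathbf{u}'$ then gives the claim. Fix $\mathbf{u}'$ and suppose for contradiction that more than $2hp^2 s_{i-1}$ indices $j$ produce a fat block. All of the blocks $B^i_{(\mathbf{u}', j)}$ project onto the same $(d-1)$-dimensional block $B'_{\mathbf{u}'}$ of size $s_i^{d-1}$, and by fatness each projection $V_j := \pr(B^i_{(\mathbf{u}', j)} \cap \F)$ has density at least $\alpha := 1/(p^2 s_{i-1})$ inside $B'_{\mathbf{u}'}$.

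I would then apply Lemma \ref{intersectionlemma} with this $\alpha$ (noting $2h/\alpha = 2hp^2 s_{i-1}$) to extract $h$ indices $j_1 < \dots < j_h$ whose projections share a common set $W \subset B'_{\mathbf{u}'}$ of size at least $(1/(12p^2 s_{i-1}))^{h+1} s_i^{d-1}$. After identifying $B'_{\mathbf{u}'}$ with $[s_i]^{d-1}$ by a coordinate-wise translation (which preserves the poset structure), $W$ lives in a grid of dimension $d-1 = d_0+l$, which is exactly the dimension appearing in Lemma \ref{densitylemma} for the parameter $l$. A direct computation using the super-polynomial separation $s_i > (100p^3 h s_{i-1})^{2h^2}$ shows that $|W|$ strictly exceeds the threshold $(8d_0(h-1) + 4lh\, s_i^{(h-1)/h})s_i^{d-2}$ (the dominant term on the threshold side is $s_i^{1-1/h}$ up to factors in $p,h$, while $|W|/s_i^{d-2}\gtrsim s_i/(p s_{i-1})^{O(h)}$). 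Hence the contrapositive of Lemma \ref{densitylemma} produces a strong copy of $P_l$ inside $W$, say on vertices $\mathbf{v}_1, \dots, \mathbf{v}_p$, where $\mathbf{v}_t$ corresponds to $z_t \in P$ of rank $r_t = \rk(z_t)$.

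The final step is to lift this strong copy from $W$ back to $\F$. Because $\mathbf{v}_t \in W \subset V_{j_{r_t}}$, I can pick some $\mathbf{y}_t \in \F \cap B^i_{(\mathbf{u}', j_{r_t})}$ with $\pr(\mathbf{y}_t) = \mathbf{v}_t$. For any comparable pair $z_s <_{P_l} z_t$, the fact that $P_l$ is a subposet of $P_0 = K_{|A_1|,\dots,|A_h|}$ forces $r_s < r_t$, hence $j_{r_s} < j_{r_t}$, which places $\mathbf{y}_s$ and $\mathbf{y}_t$ in strictly separated blocks of the last coordinate; strictness in the first $d-1$ coordinates is inherited from the strong copy in $W$. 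For incomparable pairs, those at the same rank remain incomparable already in the projection, while pairs at different ranks are incomparable in $W$ by the induced property and the lift only introduces comparability of the last coordinate that is consistent with rank, so incomparability is preserved. This produces a strong copy of $P_l$ in $\F$, contradicting the hypothesis of Lemma \ref{mainlemma}. The main obstacle I anticipate is precisely this bookkeeping for the induced/strong property of the lift, but it comes out cleanly from the rank-respecting structure of $P_l$; everything else is averaging and the choice of $s_i$ making the numerical estimates work.
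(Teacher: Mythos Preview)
Your proposal is correct and follows essentially the same approach as the paper: fix a row in $[k/s_i]^{d-1}$, assume too many fat blocks along it, apply Lemma \ref{intersectionlemma} with $\alpha = 1/(p^2 s_{i-1})$ to obtain $h$ blocks with large common projection, invoke Lemma \ref{densitylemma} in dimension $d-1 = d_0 + l$ to find a strong copy of $P_l$ in that projection, and then lift it to $\F$ by placing each vertex in the block corresponding to its rank. Your verification that the lift is a strong copy (using that $\mathcal{R}(P_l) \subset \mathcal{R}(P_0)$ forces comparable pairs to lie in distinct ranks, while incomparability is inherited from the projection) is exactly the content the paper leaves implicit in its one-line assertion that ``$\pi'(P_l)$ is a strong copy of $P_l$ in $\F$''.
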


\begin{proof}
Let $\alpha=1/p^{2}s_{i-1}$ and $m=2hp^{2}s_{i-1}=2h/\alpha$. It is enough to show that there exists no $(x_{1},\dots,x_{d-1})\in [k/s_{i}]^{d-1}$ such that the row $$\{(x_{1},\dots,x_{d-1},t):t\in [k/s_{i}]\}$$ contains more than $m$ fat elements. 

Suppose to the contrary that there exist $\mathbf{w}_{1},\dots,\mathbf{w}_{m}\in \mathcal{I}^{i}_{2}$ having the same first $d-1$ coordinates. Let $V=\pr(B^{i}_{\mathbf{w}_{1}})$ and let $V_{j}=\pr(B^{i}_{\mathbf{w}_{j}}\cap\F).$ Then $V_{j}\subset V$, and $|V_{j}|\geq \alpha |V|$ by the definition of fat.

But then Lemma \ref{intersectionlemma} implies that there exist $h$ sets among $V_{1},\dots,V_{m}$, whose common intersection has size at least $(\alpha/12)^{h+1}|V|$. Without loss of generality, suppose that $V_{1},\dots,V_{h}$ are such sets, the $d$-th coordinates of $\mathbf{w}_{1},\dots,\mathbf{w}_{h}$ are in increasing order, and let $\mathcal{G}=\bigcap_{j=1}^{h} V_{j}$. (See Figure \ref{figure2}.)

But $\mathcal{G}$ is a subfamily of $V\cong[s_{i}]^{d-1}$, where $d=d_{0}+l$, so Lemma \ref{densitylemma} tells us that $\mathcal{G}$ contains a strong copy of $P_{l}$ provided 
\begin{equation*}
|\mathcal{G}|>(8d_{0}(h-1)+4lhs_{i}^{(h-1)/h})s_{i}^{d-2}.
\end{equation*}
But this inequlaity holds, as we have $|\mathcal{G}|\geq (1/12p^{2}s_{i-1})^{h+1}s_{i}^{d-1}$ and $s_{i}\geq(100p^{3}hs_{i-1})^{2h^{2}}$. (Let us omit the simple but messy calculations.)

Let $S$ be a strong copy of $P_{l}$ in $\mathcal{G}$ and let $\pi: P_{l}\rightarrow S$ be an isomorphism. Define the injection $\pi': P_{l}\rightarrow\F$ as follows: if $z\in P_{l}$ such that $r=\rk(z)$, then let $\pi'(z)$ be an arbitrary element $\mathbf{x}\in B^{i}_{\mathbf{w_{r}}}\cap \F$ for which $\pr(\mathbf{x})=\pi(z)$. Then $\pi'(P_{l})$ is a strong copy of $P_{l}$ in $\F$, contradiction.
\end{proof}

\pgfmathsetseed{6}
	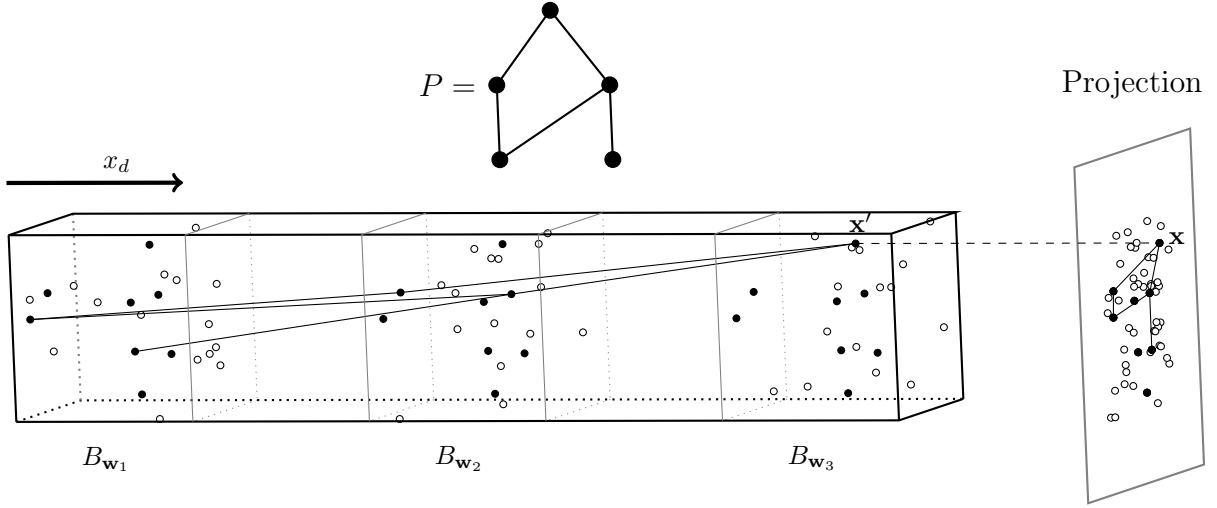
\begin{figure}
	\begin{center}
		\tdplotsetmaincoords{83}{20}
		\tdplotsetrotatedcoords{0}{-2.5}{0}%
		\begin{tikzpicture}
		[tdplot_rotated_coords,
		cube/.style={thin,black},
		grid/.style={thin,white!90!black},
		grid2/.style={thin,gray},
		rotated axis/.style={->,black,ultra thick},
		projection/.style={dashed,->,blacked, thin},
		declare function = {N=2.5;},
		declare function = {h=0.35*N;},
		declare function = {numBox=5;},
		declare function = {l=N*numBox;}
		]
		
		\coordinate (a1) at (7,0,N+1){} ;
		\coordinate (a2) at (8.6,0,N+1){} ;
		\coordinate (a3) at (7,0,N+2){} ;
		\coordinate (a4) at (8.6,0,N+2){} ;
		\coordinate (a5) at (7.8,0,N+3){} ;
		
		\foreach \i in {1,...,5}
		{
			\draw[fill=black] (a\i) circle (0.3em) ;
		}
		\draw[thick, black] (a1) -- (a3);
		\draw[thick, black] (a1) -- (a4);
		\draw[thick, black] (a2) -- (a4);
		\draw[thick, black] (a3) -- (a5);
		\draw[thick, black] (a4) -- (a5);
		\node[] at (6.3,0,N+2) {\large  $P=$} ;

		\foreach \y in {1,2,3,4}
		{
			
			\draw[thin, gray,dotted] (\y*N,N,N) -- (\y*N,N,0);	
			\draw[thin,gray,dotted] (\y*N,N,0) -- (\y*N,0,0);
		}

		\draw[rotated axis] (0,0,N+0.7) -- (N,0,N+0.7) node[anchor=south,xshift=-25]{$x_{d}$};
		
		\draw[thick, black, dotted] (0,0,0) -- (0,N,0) -- (l,N,0) ;

	
		\draw[thick, gray, dotted] (0,N,0) -- (0,N,N);
		
		\node[] at (N/2,0,-0.5) {$B_{\mathbf{w}_{1}}$};
		\node[] at (2*N+N/2,0,-0.5) {$B_{\mathbf{w}_{2}}$};
		\node[] at (4*N+N/2,0,-0.5) {$B_{\mathbf{w}_{3}}$};
		
		\draw[thick, gray] (l+3,-1,-1) -- (l+3,-1,N+1) -- (l+3,N+1,N+1) -- (l+3,N+1,-1) -- cycle;
		\node[] at (l+3.5,0,N+2) {\large Projection};

		\foreach \i in {1,...,15}
		{
			\foreach \j in {0,2,4}
			{
				\pgfmathsetmacro\x{N*\j+N/2+N*rand/2} ;
				\pgfmathsetmacro\y{N/2+N*rand/2} ;
				\pgfmathsetmacro\z{N/2+N*rand/2} ;
		  
		    	\coordinate (p\j\i) at (\x,\y,\z){} ;
		    	\draw[fill=white] (p\j\i) circle (0.13em) ;
		    	\draw[fill=white] (l+3,\y,\z) circle (0.13em) ;
		    }
		}

		\foreach \i in {1,...,8}
		{
				\pgfmathsetmacro\x{N/2+N*rand/2} ;
				\pgfmathsetmacro\y{N/2+N*rand/2} ;
				\pgfmathsetmacro\z{N/2+N*rand/2} ;
				
				\foreach \j in {0,2,4}
				{
					\coordinate (q\j\i) at (\x+\j*N,\y,\z){} ;
					\draw[fill=black] (q\j\i) circle (0.13em) ;
					
					\coordinate (r\i) at (l+3,\y,\z) ;
					\draw[fill=black] (r\i) circle (0.13em)  ;
				
				}
		}
		
		\draw[thin, black] (r5) -- (r3) ;
		\draw[thin, black] (r5) -- (r4) ;
		\draw[thin, black] (r7) -- (r4) ;
		\draw[thin, black] (r3) -- (r6) ;
		\draw[thin, black] (r4) -- (r6) ;
		\node[right,yshift=2] at (r6) {$\mathbf{x}$} ;
		
		\draw[thin, black] (q05) -- (q23) ;
		\draw[thin, black] (q05) -- (q24) ;
		\draw[thin, black] (q07) -- (q24) ;
		\draw[thin, black] (q23) -- (q46) ;
		\draw[thin, black] (q24) -- (q46) ;
		\node[xshift=2,yshift=8] at (q46) {$\mathbf{x}'$} ;
		\draw[thin,black, dashed] (q46) -- (r6) ;
		
		\draw[thick, black] (l,N,0)-- (l,0,0) -- (0,0,0);
		\draw[thick, black] (0,0,N) -- (0,N,N) -- (l,N,N) -- (l,0,N) -- (0,0,N);
		\draw[thick, black] (0,0,0) -- (0,0,N);
		\draw[thick, black] (l,0,0) -- (l,0,N);
		\draw[thick, black] (l,N,0) -- (l,N,N);

		\foreach \y in {1,2,3,4}
		{
			\draw[thin, gray] (\y*N,0,0) -- (\y*N,0,N);
			\draw[thin, gray] (\y*N,0,N) -- (\y*N,N,N);
		}
		\end{tikzpicture}
	\end{center}
	\caption{Illustration for Claim \ref{fatblocks}. Here, $\bullet$ is used for the elements of $\F$ whose projection is the same for all three blocks, and $\circ$ are the positions of the other elements. (The projections of the $\bullet$ points form $\mathcal{G}$.) First, we find a strong copy $S$ of $P$ in $\mathcal{G}$. Then, for each element $\mathbf{x}\in S$ we pick $\mathbf{x}'$ in the preimage of $\mathbf{x}$ from the block corresponding to the rank of $\mathbf{x}$ in the poset induced on $S$.
     } 
	\label{figure2}
	
\end{figure}

For $i\in [h]$, define the equivalence classes $\equiv_{i}$ similarly as in Lemma \ref{densitylemma}. For $x,y\in [k]$, let $x\equiv_{i} y$ if $\lceil x/s_{i}\rceil=\lceil y/s_{i}\rceil$. Also, let $x\rightarrow_{i}y$ if $x<y$ and $x\equiv_{i}y$, but $x\not\equiv_{i-1} y$. Moreover, if $\mathbf{x},\mathbf{y}\in[k]^{d}$, let $\mathbf{x}\equiv \mathbf{y}$ if $\mathbf{x}(d)\equiv_{i} \mathbf{y}(d)$, and let $\mathbf{x}\rightarrow_{i} \mathbf{y}$ if $\mathbf{x}(d)\rightarrow_{i}\mathbf{y}(d)$.

Say that an element $\mathbf{x}\in \F$ is \emph{good} if for $i\in[h]$, there exist $\mathbf{x}^{(-i)},\mathbf{x}^{(i)}$ such that $\mathbf{x}^{(-i)}\rightarrow_{i} \mathbf{x}$ and $\mathbf{x}\rightarrow_{i} \mathbf{x}^{(i)}$. Let $\mathcal{G}$ be the family of good elements.

\begin{claim}\label{goodclaim}
$|\mathcal{G}|\leq C_{l-1}k^{d-1}.$
\end{claim}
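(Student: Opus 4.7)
The plan is to reduce Claim \ref{goodclaim} to the inductive hypothesis of Lemma \ref{mainlemma} at parameter $l-1$. The key observation is that with $d = d_0 + l + 1$, the statement of Lemma \ref{mainlemma} at $l-1$ concerns families in dimension $d-1$. Accordingly, for each $t \in [k]$ I would consider the slice
\[
\mathcal{G}_t = \{(x_1, \ldots, x_{d-1}) \in [k]^{d-1} : (x_1, \ldots, x_{d-1}, t) \in \mathcal{G}\},
\]
so that $|\mathcal{G}| = \sum_{t=1}^{k} |\mathcal{G}_t|$. It then suffices to show that every $\mathcal{G}_t$ contains no strong copy of $P_{l-1}$: the induction hypothesis immediately gives $|\mathcal{G}_t| \leq C_{l-1} k^{d-2}$, and summing over $t$ yields $|\mathcal{G}| \leq C_{l-1} k^{d-1}$.

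To establish the absence of strong $P_{l-1}$-copies in $\mathcal{G}_t$, I would recycle the upgrading construction already developed in the proof of Lemma \ref{densitylemma}. Suppose towards contradiction that $\{\mathbf{u}_1, \ldots, \mathbf{u}_p\} \subset \mathcal{G}_t$ is a strong copy of $P_{l-1}$, with $\mathbf{u}_i$ corresponding to $z_i \in P$ under the fixed enumeration. Set $\mathbf{x}_i = (\mathbf{u}_i, t) \in \mathcal{G}$ and $S = \{z \in P : z_l \leq_{P_l} z\}$. Writing $r = \rk(z_i)$, define
\[
\mathbf{y}_i = \begin{cases} \mathbf{x}_i^{(r)} & \text{if } z_i \in S, \\ \mathbf{x}_i^{(r-1-h)} & \text{otherwise}. \end{cases}
\]
Since every $\mathbf{x}_i$ is good, the required witnesses $\mathbf{x}_i^{(\pm j)} \in \F$ exist for each $j \in [h]$, so $\mathbf{y}_i$ is well-defined. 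The verification already carried out in Lemma \ref{densitylemma} then shows that $\{\mathbf{y}_1, \ldots, \mathbf{y}_p\}$ forms a strong copy of $P_l$ inside $\F$, contradicting the hypothesis on $\F$.

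The only genuinely delicate step is the verification that the $\mathbf{y}_i$'s really form a strong copy of $P_l$: the relations $\rightarrow_i$ in the $d$-th coordinate, together with the distinction between $z_i \in S$ and $z_i \notin S$, must reproduce exactly $\mathcal{R}(P_l) = \mathcal{R}(P_{l-1}) \setminus (S \times (P \setminus S))$, and strict inequality in the $d$-th coordinate must follow from $\rk(z_i) < \rk(z_j)$ whenever $\mathbf{y}_i < \mathbf{y}_j$. Since this combinatorial check has already been performed verbatim in Lemma \ref{densitylemma}, there is no essentially new obstacle in proving Claim \ref{goodclaim}; the claim merely packages the inductive step so that the contribution of good elements can be bounded slice by slice.
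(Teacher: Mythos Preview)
Your proposal is correct and essentially identical to the paper's own proof: both slice $\mathcal{G}$ along the $d$-th coordinate, invoke the induction hypothesis of Lemma~\ref{mainlemma} at parameter $l-1$ (which concerns families in $[k]^{d-1}$), and defer the upgrading of a strong copy of $P_{l-1}$ in a slice to a strong copy of $P_l$ in $\F$ to the construction already carried out in Lemma~\ref{densitylemma}. The only cosmetic difference is that the paper argues by contradiction via averaging to locate a single slice with more than $C_{l-1}k^{d-2}$ elements, whereas you bound every slice directly and sum; these are equivalent presentations of the same argument.
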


\begin{proof}
Suppose that $|\mathcal{G}|>C_{l-1}k^{d-1}$. Then there exists $t$ such that $$\mathcal{G}_{t}=\{(x_{1},\dots,x_{d-1}): (x_{1},\dots,x_{d-1},t)\in \mathcal{G}\}$$ has more than $C_{l-1}k^{d-2}$ elements. But then by our induction hypothesis, $\mathcal{G}_{t}$ contains a strong copy of $P_{l-1}$.
 In the exact same way as in the proof of Lemma \ref{densitylemma}, we can use this strong copy of  $P_{l-1}$ to build a strong copy of $P_{l}$ in $\F$, contradiction.
\end{proof}
 
Now let us bound the size of $\mathcal{G}$ by considering medium blocks. Let $\mathcal{H}_{i}$ be the set of elements $\mathbf{x}\in\F$ such that there exist no $\mathbf{x}^{(-i)}$ or $\mathbf{x}^{(i)}$.

\begin{claim}\label{badelements}
If $i\in [h-1]$, we have $|\mathcal{H}_{i}|\leq |\F^{i}_{1}|+|\F^{i}_{2}|+2|\F^{i}_{3}|/p.$

If $i=h$, $|\mathcal{H}_{h}|\leq 2s_{h-1}k^{d-1}$.
\end{claim}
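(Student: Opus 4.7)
The plan is to handle both cases by a single local observation carried out column by column inside the relevant blocks, then summed. For $i \in [h-1]$, I would fix a block $B^i_{\mathbf{u}}$ of side $s_i$ and a column $L$ of it in the direction of the $d$-th coordinate. The relation $\equiv_{i-1}$ splits $L$ into $s_i/s_{i-1}$ consecutive subintervals of length $s_{i-1}$. For a point $\mathbf{x} \in \F \cap L$, a witness $\mathbf{x}^{(-i)}$ exists in $\F$ iff $\F \cap L$ meets some $\equiv_{i-1}$-subinterval strictly below $\mathbf{x}$'s own, and analogously for $\mathbf{x}^{(i)}$. Hence the members of $\F \cap L$ that can belong to $\mathcal{H}_i$ are exactly those lying in the first or the last non-empty $\equiv_{i-1}$-subinterval of $L$, giving at most $2 s_{i-1}$ bad points per column. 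This is the only combinatorial input.

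Next, I would sum this local estimate across the three block types. For light or fat blocks I would use the crude bound $|\mathcal{H}_i \cap B^i_{\mathbf{u}}| \leq |B^i_{\mathbf{u}} \cap \F|$; summed, these contribute at most $|\F^i_1|$ and $|\F^i_2|$ respectively. For a medium block the number of non-empty columns equals $|\pr(B^i_{\mathbf{u}} \cap \F)|$, which by failure of ``fat'' is strictly less than $s_i^{d-1}/(p^2 s_{i-1})$; multiplying by the per-column bound of $2 s_{i-1}$ gives $|\mathcal{H}_i \cap B^i_{\mathbf{u}}| < 2 s_i^{d-1}/p^2$. Combining this with $|B^i_{\mathbf{u}} \cap \F| > s_i^{d-1}/p$, which comes from failure of ``light'', produces $|\mathcal{H}_i \cap B^i_{\mathbf{u}}| < 2|B^i_{\mathbf{u}} \cap \F|/p$, and summed over medium blocks this contributes at most $2|\F^i_3|/p$, giving the first inequality of the claim.

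Finally, the case $i = h$ is the same argument with $s_h = k$: the relation $\equiv_h$ is trivial, so the whole grid $[k]^d$ plays the role of a single block. Applying the column observation to each of the $k^{d-1}$ lines in direction $d$ leaves at most $2 s_{h-1}$ bad points per line, yielding $|\mathcal{H}_h| \leq 2 s_{h-1} k^{d-1}$. No heavier tool is required; the only place where anything nontrivial happens is the medium-block calculation, and it is essentially automatic once one notices that the thresholds $s_i^{d-1}/p$ and $s_i^{d-1}/(p^2 s_{i-1})$ defining ``light'' and ``fat'' were chosen precisely so that their combination produces the factor $2/p$ rather than something larger.
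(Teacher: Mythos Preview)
Your proposal is correct and follows essentially the same approach as the paper's proof. Both arguments fix a column inside a block, observe that at most $2s_{i-1}$ points of $\F$ in that column can lack a witness (the paper phrases this via the indices $j\leq s_{i-1}$ or $j>r-s_{i-1}$, you via the first and last nonempty $\equiv_{i-1}$-subinterval), then multiply by $|\pr(B^{i}_{\mathbf{u}}\cap\F)|$ and use the medium thresholds to extract the factor $2/p$; the light and fat blocks and the case $i=h$ are handled identically.
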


\begin{proof}
First, suppose that $i\in[h-1]$. It is enough to show that if $\mathbf{u}\in [k/s_{i}]^{d}$ is medium, then $|B^{i}_{\mathbf{u}}\cap \mathcal{H}_{i}|\leq 2|B^{i}_{\mathbf{u}}\cap\F|/p$.

Let $(x_{1},\dots,x_{d-1})\in \pr( B^{i}_{\mathbf{u}}\cap \F)$ and let $t_{1}<\dots<t_{r}$ be the $d$-th coordinates of the points $\mathbf{y}_{1},\dots,\mathbf{y}_{r}\in B^{i}_{\mathbf{u}}\cap \F$, whose first $d-1$ coordinates are $(x_{1},\dots,x_{d-1})$. Clearly, $\mathbf{y}_{1}\equiv_{i}\dots\equiv_{i}\mathbf{y}_{r}$. However, $\mathbf{y}_{j}\not\equiv_{i-1} \mathbf{y}_{j+s_{i-1}}$ for $1\leq j<j+s_{i-1}\leq r$. Hence, if $s_{i-1}<j\leq r-s_{i-1}$, then $\mathbf{y}_{1}\rightarrow_{i} \mathbf{y}_{j}$ and $\mathbf{y}_{j}\rightarrow_{i} \mathbf{y}_{r}$, which means that $\mathcal{H}_{i}$ contains at most $2s_{i-1}$ points among $\mathbf{y}_{1},\dots,\mathbf{y}_{r}$. 

But this is true for every element of the projection $\pr( B^{i}_{\mathbf{u}}\cap \F)$, so $$|\mathcal{H}_{i}\cap B^{i}_{\mathbf{u}}|\leq 2s_{i-1}|\pr( B^{i}_{\mathbf{u}}\cap \F)|< 2s_{i}^{d-1}/p^{2}\leq 2|B^{i}_{\mathbf{u}}\cap\F|/p,$$
where the last two inequality holds because $\mathbf{u}$ is medium. 

Now let $h=i$. Just as before, each row of $[k]^{d}$ parallel to the $d$-th coordinate axis can contain at most $2s_{h-1}$ elements of $\mathcal{H}_{h}$. Hence, $|\mathcal{H}_{h}|\leq 2s_{h-1}k^{d-1}$.
\end{proof}

By using the trivial bound $|\F|\geq |\F_{3}^{i}|$, we also have 
$$|\mathcal{H}_{i}|\leq |\mathcal{F}_{1}^{i}|+|\mathcal{F}_{2}^{i}|+2|\F|/p$$ 
for $i\in [h-1]$.

Note that $\mathcal{G}=\F\setminus(\bigcup_{i=1}^{h} \mathcal{H}_{i})$, so Claim \ref{badelements} implies 
\begin{equation}\label{equ2}
|\mathcal{G}|\geq |\F|-\sum_{i=1}^{h} |\mathcal{H}_{i}|\geq (1-2h/p)|\F|-\sum_{i=1}^{h-1}|\mathcal{F}^{i}_{1}|-\sum_{i=1}^{h-1}|\mathcal{F}^{i}_{2}|-2s_{h-1}k^{d-1}.
\end{equation}

In what comes, we shall bound the three negative terms of the right hand side of this inequality using our earlier claims. First, we have the trivial bound that $2s_{h-1}k^{d-1}\leq C_{l}k^{d-1}/p$.

Now let us move on to the first sum on the right hand side of (\ref{equ2}). We have $|\F^{i}_{1}|\leq |\mathcal{I}^{i}_{1}|s_{i}^{d-1}/p$ as every light block contains at most $s_{i}^{d-1}/p$ elements. Combining this with Claim \ref{lightblocks}, we get 
$$\sum_{i=1}^{h-1}|\mathcal{F}_{1}^{i}|\leq \sum_{i=1}^{h-1} C_{l}(k/s_{i})^{d-1}s_{i}^{d-1}/p<hC_{l}k^{d-1}/p.$$

Finally, let us consider the second sum on the right hand side of (\ref{equ2}). As any block of size $s_{i}$ can contain at most $s_{i}^{d}$ elements, we have $|\F^{i}_{2}|\leq s_{i}^{d}|\mathcal{I}^{i}_{2}|$. Hence, Claim \ref{fatblocks} implies that
$$\sum_{i=1}^{h-1}|\mathcal{F}_{2}^{i}|\leq \sum_{i=1}^{h-1}2hp^{2}s_{i-1}s_{i}k^{d-1}\leq 2h^{2}p^{2}s_{h-1}^{2}k^{d-1}\leq C_{l}k^{d-1}/p.$$

Plugging these three upper bounds in (\ref{equ2}), we arrive to the inequality
$$|\mathcal{G}|\geq (1-2h/p)|\F|-(h+2)C_{l}k^{d-1}/p\geq (1-2h/p)|\F|-2hC_{l}k^{d-1}/p.$$
 But Claim \ref{goodclaim} also provides an upper bound on $|\mathcal{G}|$. Hence, we have
$$C_{l-1}k^{d-1}\geq (1-2h/p)|\F|-2hC_{l}k^{d-1}/p,$$
which yields
$$|\F|\leq (C_{l-1}+2hC_{l}/p)k^{d-1}/(1-2h/p)\leq C_{l}k^{d-1},$$
where the last inequality holds by the definition of $C_{l}$. This finishes the proof.

\end{proof}

Now we are in a position to easily conclude the proof of our main theorem, by combining Theorem \ref{mainthm2} and Theorem \ref{gridtogrid}.

\begin{proof}[Proof of Theorem \ref{mainthm}.]
	Let $d=2|P|$ and let $c(h)$ be a function given by Theorem \ref{mainthm2}. Then every (induced $P$)-free family $\F'\subset [k]^{d}$ satisfies $|\F'|\leq |P|^{c(h)}k^{d-1}$. But then by Theorem \ref{gridtogrid}, for every $n\geq 2|P|$ and every (induced $P$)-free $\F\subset [k]^{n}$, we have $|\F|\leq O(\sqrt{2|P|}|P|^{c(h)}w)$, where $w$ is the width of $[k]^{n}$. Setting $c_{h}=c(h)+O(1)$ finishes our proof.
\end{proof} 

Now let us turn to the special case of $h=2$, which is Theorem \ref{bipartitethm}. The proof of this theorem  is the same as the proof of Theorem \ref{mainthm}, but we choose our parameters more carefully. We shall only sketch the proof.

\begin{proof}[Proof of Theorem \ref{bipartitethm} (Sketch).]
	Let $P$ be a poset of height $2$. Let $A$ be the set of minimal elements of $P$, and let $B$ be the set of maximal elements of $P$. Without loss of generality, let $a=|A|$, $b=|B|$, $a\leq b$. Then $q=a$ and $d_{0}=O(\log b)$. Also, Lemma \ref{densitylemma} yields that for $l\in \{0,1,...,a\}$, if $\F\subset [k]^{d_{0}+l}$ does not contain a strong copy of $P_{l}$, then $|\F|=O(ak^{d_{0}+l-1/2}+k^{d_{0}+l-1}\log b)$.
	
	Now consider the proof of Lemma \ref{mainlemma} for this special case. One can choose the parameters $s_{1}=a^{\Theta(1)}(\log b)^{\Theta(1)}$, $C_{0}=a^{\Theta(1)}(\log b)^{\Theta(1)}$ and $C_{i+1}=C_{i}(1+O(1/a))$ to show that if the family $\F\subset [k]^{d_{0}+l+1}$ does not contain a strong copy of $P_{l}$, then $|\F|\leq C_{l}k^{d_{0}+l}$. But then $C_{q}=C_{a}=a^{\Theta(1)}(\log b)^{\Theta(1)}$, so if the family $\F\subset [k]^{d_{0}+a+1}$ does not contain a strong copy of $P_{q}=P_{a}=P$, then $|\F|=a^{\Theta(1)}(\log b)^{\Theta(1)}k^{d_{0}+a}$.
	
	But then by Theorem \ref{gridtogrid}, for every $n\geq=d_{0}+a=a+\Theta(\log b)$ and every (induced $P$)-free $\F\subset [k]^{n}$, we have 
	$$|\F|= a^{\Theta(1)}(\log b)^{\Theta(1)}w,$$ where $w$ is the width of $[k]^{n}$.
	
\end{proof}

\section{Acknowledgments}

I would like to thank D\'{a}niel Kor\'{a}ndi for the simple proof of Lemma \ref{intersectionlemma}.

\end{document}